\newtheorem{theorem}{Theorem}[section]
\newtheorem{lemma}[theorem]{Lemma}
\newtheorem{cor}[theorem]{Corollary}
\theoremstyle{definition}
\newtheorem{definition}[theorem]{Definition}
\numberwithin{equation}{section}
\numberwithin{figure}{section}
\newcommand{\comp}{\boldsymbol{\beta}}
\newlength{\cellsize}
\newcommand\tableau[1]{
\vcenter{
\let\\=\cr
\baselineskip=-16000pt
\lineskiplimit=16000pt
\lineskip=0pt
\halign{&\tableaucell{##}\cr#1\crcr}}}
\newcommand{\tableaucell}[1]{{%
\def \arg{#1}\def \void{}%
\ifx \void \arg
\vbox to \cellsize{\vfil \hrule width \cellsize height 0pt}%
\else
\unitlength=\cellsize
\begin{picture}(1,1)
\put(0,.22){\makebox(1,1)[b]{$#1$}}
\put(0,0){\line(1,0){1}}
\put(0,1){\line(1,0){1}}
\put(0,0){\line(0,1){1}}
\put(1,0){\line(0,1){1}}
\end{picture}%
\fi}}
\author{Sarah K. Mason
  \and Elizabeth Niese$^*$
}
\thanks{$^*$ This work was supported by the Wake Forest University Pilot Collaboration Grant.}
\title{Quasisymmetric $(k,l)$-hook Schur functions}
\keywords{quasisymmetric functions, Schur functions, tableaux, RSK}
\begin{document}

\begin{abstract}

We introduce a quasisymmetric generalization of Berele and Regev's hook Schur functions and prove that these new quasisymmetric hook Schur functions decompose the hook Schur functions in a natural way.  We examine the combinatorics of the quasisymmetric hook Schur functions, providing a relationship to Gessel's fundamental quasisymmetric functions and an analogue of the Robinson-Schensted-Knuth algorithm.  We also prove that the multiplication of quasisymmetric hook Schur functions with hook Schur functions behaves the same as the multiplication of quasisymmetric Schur functions with Schur functions.
  \end{abstract}

\maketitle
\maketitle
\section{Introduction}

Hook Young diagrams became of interest as the classical Schur-Weyl duality was extended to the general linear Lie superalgebra.  Schur~\cite{Sch01} determined a one-to-one correspondence between irreducible representations of the general linear group $GL(V)$ and subsets of the irreducible representations of $S_n$.  Weyl's {\em Strip Theorem}~\cite{Wey39} states that these irreducible representations of the general linear group $GL(V)$ are precisely those obtained from partitions whose Young diagrams lie inside a strip of height $k$, where $k$ is the dimension of the vector space $V$.  Schur's action of $S_n$ on $V^{\otimes n}$ and that from Weyl's Strip Theorem are dual.  Berele and Regev~\cite{BerReg87} generalize the two actions of $S_n$ on $V^{\otimes n}$ into a single action by considering a decomposable vector space $V=T\oplus U$ such that $\dim(T)=k$ and $\dim(U)=l$.  In this new setting, the indexing set is given by partitions which lie inside a hook shape of height $k$ and width $l$, called a {\em hook Young diagram}, meaning there are at most $k$ parts greater than $l$.  Berele and Regev use certain fillings of these diagrams to generate polynomials known as $(k,l)$-hook Schur functions on two sets of variables, which generalize the classical Schur functions and appear naturally when examining characters of a certain $S_n$ representation of $GL(k) \times GL(l)$.

Remmel~\cite{Rem84} introduces an analogue of the Robinson-Schensted-Knuth (RSK) algorithm for {\it $(k,l)$-semistandard tableaux}, the objects used to generate $(k,l)$-hook Schur functions.  The insertion algorithm underlying this RSK analogue is an important component in the rule for multiplying two $(k,l)$-hook Schur functions.  This procedure is similar to the Schensted insertion used in the proof of the Littlewood-Richardson rule for the product of two Schur functions.  Remmel~\cite{Rem87} further exploits the rich structure of these objects to prove a number of permutation statistic identities, including a generalization of the {\it Cauchy Identity}~\cite{Mac92} which provides a generating function for products of Schur functions.

The Schur functions (which form a basis for symmetric functions) can be obtained as specializations of Macdonald polynomials~\cite{Mac95}.  A basis for quasisymmetric functions can be similarly obtained through specializations of nonsymmetric Macdonald polynomials~\cite{HLMvW09}.  This basis, called the {\it quasisymmetric Schur function basis}, can also be obtained by summing certain collections of Type A Demazure atoms and is of interest due to its combinatorial similarities to the Schur functions~\cite{LMvW13} as well as its algebraic significance in the noncommutative character theory of the symmetric group~\cite{vWi13}.

In this paper, we provide a quasisymmetric analogue of the $(k,l)$-hook Schur functions obtained by summing the weights of fillings of composition diagrams satisfying certain conditions and prove that this analogue decomposes the $(k,l)$-hook Schur functions in a natural way.\footnote{An extended abstract of the first three sections and the fifth and sixth sections of this paper appeared in \cite{MasNie14}.}  In Section~\ref{background}, we describe the hook composition tableaux used to generate the quasisymmetric hook Schur functions which are obtained from a combination of quasisymmetric Schur functions and row-strict quasisymmetric Schur functions.  In Section~\ref{section:properties} we discuss several properties of the quasisymmetric hook Schur functions.  Section~\ref{section:fun} describes a relationship between the quasisymmetric hook Schur functions and a superized version of Gessel's fundamental quasisymmetric functions.  In Sections~\ref{section:insertion} and ~\ref{section:rsk} we introduce an insertion algorithm and use it to provide an analogue of the Robinson-Schensted-Knuth algorithm as well as a generalized Cauchy identity.  In Section~\ref{sec:LR} we prove a Littlewood-Richardson rule for the product of a quasisymmetric hook Schur function and a hook Schur function.  Sections~\ref{background},~\ref{section:properties}, \ref{section:insertion}, and ~\ref{section:rsk} appeared in an extended abstract format in~\cite{MasNie14} but Sections~\ref{section:fun} and~\ref{sec:LR} are new to this paper.

\subsection{Acknowledgements}

The authors would like to especially thank Jeff Remmel for suggesting this avenue of research and his helpful comments along the way.  The helpful comments of an anonymous reviewer were also instrumental in completing the proof of the Littlewood-Richardson analogue.

\section{Background}{\label{background}}

The $(k,l)$-hook Schur functions introduced by Berele and Regev~\cite{BerReg87} are defined combinatorially using $(k,l)$-semistandard hook tableaux.  Frequently the $k$ and $l$ designations are dropped and these diagrams are referred to simply as {\it semistandard hook tableaux}.  Begin with the {\it Young diagram} of $\lambda = (\lambda_1, \lambda_2, \hdots , \lambda_n)$, which is given by placing $\lambda_i$ boxes (or {\it cells}) in the $i^{th}$ row from the bottom of the diagram, in French notation.  A {\it $(k,l)$-semistandard hook tableau} (SSHT) of shape $\lambda$ is a filling of the Young diagram of $\lambda$ with letters from two different alphabets $\mathcal{A}=\{1,2,\ldots, k\} $ and $\mathcal{A}'=\{1',2',\ldots, l'\}$ where $k<1'$ such that the unprimed entries weakly increase from left to right along rows and strictly increase from bottom to top in columns while the primed entries strictly increase from left to right along rows and weakly increase from bottom to top in columns~\cite{BerReg87}.  All rows and columns must be weakly increasing, so in any given column all of the primed entries appear in a higher row than all of the unprimed entries, and in any given row all of the primed entries appear to the right of all of the unprimed entries, as seen in Figure~\ref{(k,l)SSYT}.  We can then define a $(k,l)${\em-hook Schur function} as a polynomial in $X=x_1,x_2,\ldots,x_k$ and $Y=y_1,y_2,\ldots, y_l$ by
\[HS_\lambda (X;Y) = \sum_{T\in SSHT(\lambda)} x_1^{v_1}x_2^{v_2}\cdots x_k^{v_k}y_1^{u_1}y_2^{u_2}\cdots y_l^{u_l},\]
where $v_i$ is the number of times $i$ occurs in $T$ and $u_i$ is the number of times $i'$ occurs in $T$.  Note that $HS_\lambda(X;0)=s_\lambda(X)$ and $HS_\lambda(0;Y)=s_{\lambda'}(Y)$ where $\lambda'$ is the conjugate partition of $\lambda$ and $s_\lambda$ is a Schur polynomial.

\begin{figure}
$$T=\tableau{ 1' \\ 1' & 2'  \\ 2 & 2 & 3' & 4'  \\ 1 & 1 & 4 & 4'}$$
\caption{$T$ is a semistandard hook tableau of shape $(4,4,2,1)$ and weight $x_1^2 x_2^2 x_4 y_1^2y_2y_3y_4^2$.}\label{(k,l)SSYT}
\end{figure}

The quasisymmetric Schur functions were introduced in \cite{HLMvW09} as polynomials generated by {\it composition tableaux}, generalizations of semistandard Young tableaux whose underlying shapes are compositions instead of partitions.  These polynomials form a basis for quasisymmetric functions and decompose the Schur functions in a natural way.  A closely related set of polynomials, still given by fillings of composition diagrams~\cite{LMvW13}, is more natural for us to work with for the purposes of this paper, but note that this new form, denoted here by $\mathcal{CS}_{\alpha}$, is easily obtained from the original definition by a reversal of the entries in a filling.  A slight modification of the definition of $\mathcal{CS}_{\alpha}$ produces a new basis for quasisymmetric functions that is generated using a row-strict analogue of the composition tableaux~\cite{MasRem10}.  We will again work with the variation, $\mathcal{RS}_{\alpha}$, of the row-strict quasisymmetric functions obtained by the same reversal procedure as is employed in \cite{LMvW13}.  In fact, we further extend this approach to include skew compositions as indexing compositions as in \cite{MN-SkewRS}.

Let $\alpha = (\alpha_1, \alpha_2, \hdots , \alpha_m)$ be a composition.  Then its {\it composition diagram} is given by placing $\alpha_i$ boxes (or {\it cells}) in the $i^{th}$ row from the bottom of the diagram, in French notation.  The cells are indexed by row and column, so that $(i,j)$ refers to the cell in the $i^{th}$ row from the bottom and the $j^{th}$ column from the left. The composition $\alpha$ is said to be {\it contained} in the composition $\beta$ (written $\alpha \subset \beta$) if and only if $\ell(\alpha) \le \ell(\beta)$ and $\alpha_i \le \beta_i$ for all $1 \le i \le \ell(\alpha)$.  
 Given $\alpha \subset \beta$, we define a {\it skew composition diagram} of shape $\beta//\alpha$ to be the diagram of $\beta$ with the cells of $\alpha$ removed from the bottom left corner.

\begin{definition}\label{SSYCTdef}
A filling $T: \beta // \alpha \rightarrow \mathbb{Z}_+$ is a {\it semi-standard Young composition tableau (SSYCT)} of shape $\beta // \alpha$ if it satisfies the following conditions:

\begin{enumerate}
\item the row entries are weakly increasing from left to right,
\item the entries in the leftmost column are strictly increasing from bottom to top, and 
\item (triple condition) for $1\leq i<j \leq \ell(\beta)$ and $1\leq k<m$, where $m$ is the size of the largest part of $\beta$, if $T(i,k+1)\neq \infty$ and $T(i,k+1) \ge T(j,k)$, then $T(i,k+1) >T(j,k+1)$, assuming the entry in any cell not contained in $\beta$ is $\infty$ and the entry in any cell contained in $\alpha$ is 0. 
\end{enumerate}

Let $\beta$ be a composition.  Then the Young quasisymmetric Schur function $\mathcal{CS}_{\beta}$ is given by $$\mathcal{CS}_{\beta}=\sum_T x^T$$ where the sum is over all SSYCT $T$ of shape $\beta$, where $\beta$ may be a skew composition.  

\end{definition}

\begin{definition}\label{SSYRTdef}
A filling $T: \beta // \alpha \rightarrow \mathbb{Z}_+$ is a {\it semi-standard Young row-strict composition tableau (SSYRT)} of shape $\beta // \alpha$ if it satisfies the following conditions:

\begin{enumerate}
\item the row entries are strictly increasing from left to right,
\item the entries in the leftmost column are weakly increasing from bottom to top, and 
\item (triple condition) for $1\leq i<j \leq \ell(\beta)$ and $1\leq k<m$, where $m$ is the size of the largest part of $\beta$, if $T(i,k+1)\neq \infty$ and $T(i,k+1)>T(j,k)$, then $T(i,k+1)\geq T(j,k+1)$, assuming the entry in any cell not contained in $\beta$ is $\infty$ and the entry in any cell contained in $\alpha$ is 0. 
\end{enumerate}

Let $\beta$ be a composition.  Then the Young row-strict quasisymmetric Schur function $\mathcal{RS}_{\beta}$ is given by $$\mathcal{RS}_{\beta}=\sum_T x^T$$ where the sum is over all SSYRT $T$ of shape $\beta$, where $\beta$ may be a skew composition.  

\end{definition}

Combining these two approaches, we have the following definition for the composition analogue of a $(k,l)$-semistandard hook tableau.
\begin{definition}\label{def:HCT}
As before, let $\mathcal{A}=\{1,2,\ldots, k\}$ and $\mathcal{A}'=\{1',2',\ldots, l'\}$ with total ordering $1<2<\cdots<k<1'<2'\cdots<l'$ on $\mathcal{A}\cup\mathcal{A}'$.
Given a composition diagram $\alpha=(\alpha_1, \alpha_2, \hdots , \alpha_r)$ with largest part $m$, a {\it hook composition tableau} (HCT), $F$, is a filling of the cells of $\alpha$ with letters from $\mathcal{A}\cup\mathcal{A'}$ such that
\begin{enumerate}
\item the entries of $F$ weakly increase in each row when read from left to right,
\item the unprimed (resp. primed) entries of $F$ weakly (resp. strictly) increase in each row 
when read from left to right,
\item the unprimed (resp. primed) entries in the leftmost column of $F$ strictly (resp. weakly) increase when read from bottom to top, 
\item and $F$ satisfies the following {\it triple rule}:\\
Supplement $F$ by adding enough cells with infinity-valued entries to the end of each row so that the resulting supplemented tableau, $\hat{F}$, is of rectangular shape $r \times m$.  Then for $1 \le i<j \le r, \; \; 2 \le n \le m,$ where $\hat{F}(i,n)$ denotes the entry of $\hat{F}$ that lies in the cell in the $i$-th row from the bottom and $n$-th column from the left,
\begin{enumerate}[label=(\alph*)]
\item if $\hat{F}(i, n+1) \in \mathcal{A}$ and $\hat{F}(i,n+1) \ge \hat{F}(j,n),$ then $\hat{F}(i,n+1) > \hat{F}(j,n+1),$ and
\item if $\hat{F}(i, n+1) \in \mathcal{A}'$ and $\hat{F}(i,n+1) > \hat{F}(j,n),$ then $\hat{F}(i,n+1) \ge \hat{F}(j,n+1).$
\end{enumerate}
\end{enumerate}
\end{definition}

See Figure~\ref{fig:triple} for the triple configuration.
Note that triple rule (a) is identical to the triple rule used to define a {\it semi-standard Young composition tableau} in Definition~\ref{SSYCTdef}
 and that triple rule (b) is identical to the triple rule used to define a {\it row-strict semi-standard Young composition tableau} in Definition~\ref{SSYRTdef}.  
This is due to the fact that the unprimed portion of the filling behaves like a semistandard Young composition tableau while the primed portion behaves like a row-strict analogue of a skew semistandard Young composition tableau.
\begin{figure}
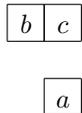

  $$\tableau{b & c \\ \\ & a}.$$
\caption{Triple configuration with $\hat{F}(i,n+1)=a$, $\hat{F}(j,n)=b$, and $\hat{F}(j,n+1)=c$.  If $a \in \mathcal{A}$, then $a \ge b \Rightarrow a > c$.  If $a \in \mathcal{A'}$, then $a>b \Rightarrow a \ge c$.}\label{fig:triple}
\end{figure}{\label{fig:triple}}

\begin{definition}
The {\it quasisymmetric $(k,l)$-hook Schur function $\mathcal{HQ}_{\alpha}$
} indexed by the composition $\alpha$ is given by $$ \mathcal{HQ}_{\alpha} (X; Y)=\sum_{F \in HCT(\alpha)}x_1^{v_1}x_2^{v_2} \cdots x_k^{v_k}y_1^{u_1}y_2^{u_2} \cdots y_l^{u_l},$$ where $HCT(\alpha)$ is the set of all hook composition tableaux of shape $\alpha$, $v_i$ is the number of times the letter $i$ appears in $F$, and $u_i$ is the number of times the letter $i'$ appears in $F$.
\end{definition}


See Figure~\ref{hookQS} for an example of a quasisymmetric $(k,l)$-hook Schur function and the fillings appearing in such a function.  Note that $\mathcal{CS}_\alpha(X) = \mathcal{HQ}_\alpha(X;0)$ and $\mathcal{RS}_\alpha(X) = \mathcal{HQ}_\alpha(0;X)$.

\begin{figure}[b]
$$\mathcal{HQ}_{(1,2,1)}(x_1,x_2; y_1, y_2)=$$
$$\tableau{1' \\ 2 & 2 \\ 1} \quad \tableau{1' \\ 2 & 1' \\1} \quad \tableau{2' \\ 2 & 2 \\ 1} \quad \tableau{2' \\ 2 & 1' \\ 1} \quad \tableau{2' \\ 2 & 2' \\ 1} \quad  \tableau{2' \\ 1' & 2' \\ 1} \quad \tableau{2' \\ 1' & 2' \\ 2} \quad \tableau{2' \\ 1' & 2' \\ 1'}$$
$ x_1 x_2^2 y_1 \;    +   \; x_1 x_2 y_1^2 \;  + \;  x_1 x_2^2 y_2   +   x_1 x_2 y_1 y_2   + x_1 x_2 y_2^2 \;  +  \;  x_1 y_1 y_2^2 \;  +  \;  x_2 y_1 y_2^2 \; +  \; y_1^2 y_2 ^2 \quad $
\caption{The quasisymmetric $(2,2)$-hook Schur function $\mathcal{HQ}_{(1,2,1)}(x_1,x_2; y_1, y_2)$.}\label{hookQS}
\end{figure}

\section{Properties of the quasisymmetric $(k,l)$-hook Schur functions}{\label{section:properties}}

Every Schur function decomposes into a positive sum of quasisymmetric Schur functions~\cite{HLMvW09}.  Similarly, every Schur function also decomposes into a positive sum of row-strict quasisymmetric Schur functions~\cite{MasRem10, MN-SkewRS}.  In particular, $$s_{\lambda} = \sum_{\boldsymbol{\lambda}(\alpha) = \lambda} \mathcal{CS}_{\alpha} = \sum_{\boldsymbol{\lambda}(\alpha) = \lambda'} \mathcal{RS}_{\alpha}$$
The following theorem demonstrates the fact that this behavior continues as expected in the case of quasisymmetric $(k,l)$-hook Schur functions.

\begin{theorem}{\label{thm:hookSchur}}
The $(k,l)$-hook Schur functions decompose into a positive sum of quasisymmetric $(k,l)$-hook Schur functions in the following way:  $$HS_{\lambda} (X; Y) = \sum_{\boldsymbol{\lambda}(\alpha)=\lambda} \mathcal{HQ}_{\alpha} (X; Y),$$
where $\boldsymbol{\lambda}(\alpha)=\lambda$ indicates that the parts of $\alpha$ rearrange to $\lambda$ when placed in weakly decreasing order.
\end{theorem}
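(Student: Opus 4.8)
The plan is to construct a weight-preserving bijection
\[ \Phi : SSHT(\lambda) \longrightarrow \bigsqcup_{\boldsymbol{\lambda}(\alpha)=\lambda} HCT(\alpha), \]
since any such bijection identifies the monomial in $X$ and $Y$ contributed by a filling on the left with the monomial contributed by its image on the right, immediately yielding the claimed identity. The first step is to record that every $T\in SSHT(\lambda)$ splits canonically into an unprimed part and a primed part. Because all unprimed letters precede all primed letters and rows (resp.\ columns) are weakly increasing, the unprimed cells of $T$ are left-justified in each row and bottom-justified in each column, so they form the Young diagram of a partition $\mu\subseteq\lambda$; the entries there constitute a semistandard Young tableau $U$, while the primed entries form a row-strict filling $P$ of the skew shape $\lambda/\mu$. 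Conversely, any pair $(U,P)$ of this type glues back to a valid semistandard hook tableau, the boundary conditions being automatic since unprimed $<$ primed. Thus $SSHT(\lambda)\cong\bigsqcup_{\mu\subseteq\lambda} SSYT(\mu)\times R(\lambda/\mu)$, where $R(\lambda/\mu)$ denotes the row-strict skew tableaux, and this identification is weight-preserving.

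The analogous statement on the composition side is the heart of the argument, and I would isolate it as a gluing lemma: a filling of $\alpha$ with unprimed letters left of primed in each row is a hook composition tableau if and only if its unprimed cells form an $SSYCT$ of some composition shape $\gamma\subseteq\alpha$ and its primed cells form an $SSYRT$ of the skew shape $\alpha//\gamma$. The nontrivial content is verifying the triple rule across the unprimed/primed boundary. Whenever $\hat F(i,n+1)$ is unprimed, triple rule (a) is either vacuous (when $\hat F(j,n)$ is primed, so the hypothesis $\hat F(i,n+1)\ge \hat F(j,n)$ fails) or is exactly the $SSYCT$ triple condition, while the offending case in which $\hat F(j,n+1)$ is primed is already excluded by validity of the unprimed $SSYCT$, since there a cell outside $\gamma$ is read as $\infty$ and the condition would demand $\hat F(i,n+1)>\infty$. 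Whenever $\hat F(i,n+1)$ is primed, triple rule (b) reduces to the $SSYRT$ triple condition once the inner (unprimed) cells are read as $0$, exactly as in Definition~\ref{SSYRTdef}. Granting this lemma, $HCT(\alpha)\cong\bigsqcup_{\gamma\subseteq\alpha} SSYCT(\gamma)\times SSYRT(\alpha//\gamma)$ weight-preservingly, and hence $\mathcal{HQ}_\alpha(X;Y)=\sum_{\gamma\subseteq\alpha}\mathcal{CS}_\gamma(X)\,\mathcal{RS}_{\alpha//\gamma}(Y)$.

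It then remains to match the two decompositions. On each factor I would invoke the existing maps: the bijection of~\cite{HLMvW09} carries $SSYT(\mu)$ onto $\bigsqcup_{\boldsymbol{\lambda}(\gamma)=\mu} SSYCT(\gamma)$, and the skew row-strict bijection of~\cite{MN-SkewRS} carries $R(\lambda/\mu)$ onto the union of the $SSYRT$ of the relevant skew composition shapes, both preserving content. The forward map $\Phi$ is then to decompose $T$ into $(U,P)$, apply the first bijection to obtain $\rho(U)\in SSYCT(\gamma)$, apply the skew bijection to $P$ relative to the inner shape $\gamma$ to obtain an $SSYRT$ of shape $\alpha//\gamma$, and glue. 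The main obstacle I anticipate is precisely this compatibility: I must ensure that the skew row-strict bijection applied to the primed part can be anchored at the very composition $\gamma$ produced from the unprimed part, rather than some other rearrangement of $\mu$, so that the two images share an inner shape and the gluing lemma applies. Equivalently, at the level of generating functions one reindexes $\sum_{\boldsymbol{\lambda}(\alpha)=\lambda}\sum_{\gamma\subseteq\alpha}\mathcal{CS}_\gamma(X)\,\mathcal{RS}_{\alpha//\gamma}(Y)$ by the partition $\mu=\boldsymbol{\lambda}(\gamma)$, using $\sum_{\boldsymbol{\lambda}(\gamma)=\mu}\mathcal{CS}_\gamma=s_\mu$ together with the skew row-strict expansion of~\cite{MN-SkewRS} to collapse the primed sum to the $Y$-generating function of $R(\lambda/\mu)$. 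Checking that the skew shapes $\alpha//\gamma$ arising here are exactly those indexing that expansion is the delicate bookkeeping step, and it is where I expect to spend the most care.
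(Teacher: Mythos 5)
Your route is genuinely different from the paper's. The paper proves the theorem with a single explicit weight-preserving bijection $f$: column by column, the entries of each column of the hook tableau are placed, smallest to largest, into the highest admissible cell of the corresponding column of the composition diagram; the two triple rules are then verified directly and an explicit inverse is given. That self-contained construction is reused later (Lemma~\ref{lem:commute}, the RSK analogue, and the Littlewood--Richardson rule), which is part of what it buys. You instead factor both sides through the unprimed/primed splitting, $SSHT(\lambda)\cong\bigsqcup_{\mu\subseteq\lambda}SSYT(\mu)\times R(\lambda/\mu)$ and a gluing lemma $HCT(\alpha)\cong\bigsqcup_{\gamma\subseteq\alpha}SSYCT(\gamma)\times SSYRT(\alpha//\gamma)$, and then try to match the two decompositions by citing known results. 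This is attractive and more structural (it would simultaneously prove the identity $\mathcal{HQ}_\alpha=\sum_{\gamma\subseteq\alpha}\mathcal{CS}_\gamma(X)\,\mathcal{RS}_{\alpha//\gamma}(Y)$, which the paper states without proof), and your analysis of the triple rules across the unprimed/primed boundary is correct. However, as written the proposal has two genuine gaps.

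First, in the gluing lemma, the claim that the unprimed cells of an HCT form a composition shape $\gamma\subseteq\alpha$ anchored at the bottom left requires that no all-primed row lie below a row containing an unprimed entry, equivalently that in the first column every unprimed entry sits below every primed entry. Condition (3) of Definition~\ref{def:HCT}, as literally printed, constrains the unprimed and primed first-column entries \emph{separately}: nothing there forbids the filling of shape $(1,1)$ with $1'$ in the bottom cell and $1$ in the top cell, whose unprimed cells do not form a bottom-anchored composition. You must either invoke the evidently intended reading (the whole first column weakly increases, which is what the paper's own map $f$ and its inverse produce) or work with weak compositions $\gamma$ having interleaved zero parts, as the paper does in Section~\ref{sec:LR}. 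Your lemma is false without settling this point, and your verification addresses only the triple rules, not the shape claim.

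Second, and more seriously, the step you label ``delicate bookkeeping'' is not bookkeeping; it is the substance of the theorem. After swapping summations you need, for each fixed composition $\gamma$ with $\boldsymbol{\lambda}(\gamma)=\mu$, the identity
\[
\sum_{\substack{\alpha\supseteq\gamma\\ \boldsymbol{\lambda}(\alpha)=\lambda}}\mathcal{RS}_{\alpha//\gamma}(Y)\;=\;s_{\lambda'/\mu'}(Y),
\]
with the sum independent of which rearrangement $\gamma$ of $\mu$ was produced by the unprimed part. This is the row-strict analogue of the Bessenrodt--Luoto--van Willigenburg fixed-inner-shape decomposition of skew Schur functions into skew quasisymmetric Schur functions. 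It is true, and it can be derived by applying the coproduct to $s_\lambda=\sum_{\boldsymbol{\lambda}(\alpha)=\lambda'}\mathcal{RS}_\alpha$ and equating coefficients in the basis $\{\mathcal{RS}_\gamma\}$, provided the coproduct formula $\Delta\mathcal{RS}_\beta=\sum_{\gamma\subseteq\beta}\mathcal{RS}_\gamma\otimes\mathcal{RS}_{\beta//\gamma}$ is available; but none of this is supplied by the non-skew expansion $\sum_{\boldsymbol{\lambda}(\gamma)=\mu}\mathcal{CS}_\gamma=s_\mu$ that you quote, and you have not verified that \cite{MN-SkewRS} states the needed result in this anchored form. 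Until that identity is proved or located precisely, your argument is incomplete at exactly the point where the quasisymmetric refinements on the two sides must be reconciled --- which is the point the paper's explicit bijection handles directly.
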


\begin{proof}
We exhibit a weight-preserving bijection, $f$, between the set of all semistandard hook tableaux of shape $\lambda$ and the set of all hook composition tableaux whose shape rearranges to $\lambda$.  This map is a generalization of the map given in~\cite{HLMvW09} between semistandard tableaux and composition tableaux.

Given a semistandard hook tableau $T$ of shape $\lambda$, map the entries in the leftmost column of $T$ to the leftmost column of $f(T)$ by placing them in weakly increasing order from bottom to top.  Map each remaining set of column entries from $T$ into the corresponding column of $f(T)$ by the following process.  
\begin{enumerate}
\item Assume that the entries in the first $j-1$ columns have been inserted into $f(T)$ and begin with the smallest entry, $a_1$, in the set of entries in the $j^{th}$ column of $T$.
\item If $a_1$ is unprimed, map $a_1$ to the highest available cell that is immediately to the right of an entry weakly smaller than $a_1$.  If $a_1$ is primed, map $a_1$ to the highest available cell that is immediately to the right of an entry strictly smaller than $a_1$.
\item Repeat Step 2 with the next smallest entry, noting that a cell is {\it available} if no entry has already been placed in this cell.
\item Continue until all entries from this column have been placed, and then repeat with each of the remaining columns.
\end{enumerate} 

See Figure \ref{TheMap} for an example of this map.  We must show that this process produces a hook composition tableau.  Suppose $F=f(T)$ for some hook tableau $T$.  The first three conditions are satisfied by construction, so we must check the fourth (triple) condition.  For part (a), consider two cells $\hat{F}(i,k+1)$ and $\hat{F}(j,k)$ such that $\hat{F}(i,k+1) \in \mathcal{A}$ and $\hat{F}(i,k+1) \ge \hat{F}(j,k)$.  We must show that $\hat{F}(i,k+1) > \hat{F}(j,k+1).$  Let $\hat{F}(j,k)=b$, $\hat{F}(i,k+1)=a$, and $\hat{F}(j,k+1)=c$.  Then the cells are situated as shown in Figure~\ref{fig:triple} with $a \ge b$.

We must prove that $\hat{F}(i,k+1) > \hat{F}(j,k+1)$, or in other words, that $a > c$.  Assume, to get a contradiction, that $a < c$.  (We know $c \not= a$ since $a \in \mathcal{A}$ and there are no repeated column entries from $\mathcal{A}$. ) Since $a < c$, then $a$ would be inserted into its column before $c$.  But then the cell immediately to the right of $b$ would be available during the insertion of $a$, and therefore $a$ would be placed in that cell since $a \ge b$ and $a \in \mathcal{A}$.  Therefore this configuration would not occur and thus $a>c$.  

Next, for part (b), consider two cells $\hat{F}(i,k+1)$ and $\hat{F}(j,k)$ such that $\hat{F}(i,k+1) \in \mathcal{A}'$ and $\hat{F}(i,k+1) > \hat{F}(j,k)$.  We must show that $\hat{F}(i,k+1) \ge \hat{F}(j,k+1).$  Let $\hat{F}(j,k)=b$, $\hat{F}(i,k+1)=a$, and $\hat{F}(j,k+1)=c$ as before.  Then the cells are situated as in Figure~\ref{fig:triple} with $a > b$.

We must prove that $\hat{F}(j,k+1) \ge \hat{F}(i,k+1)$, or in other words, that $a \ge c$.  Assume, to get a contradiction, that $a < c$.  Then, as before, $a$ would be inserted into its column before $c$.  But then the cell immediately to the right of $b$ would be available during the insertion of $a$, and therefore $a$ would be placed in that cell since $a > b$ and $a \in \mathcal{A}'$.  Therefore this configuration would not occur and thus $a \ge c$.  


The inverse map, $f^{-1}$, is given by arranging the entries from each column of a hook composition tableau $U$ so that the unprimed entries are strictly increasing from bottom to top, and above them the primed entries are weakly increasing from bottom to top.  Since repeated unprimed entries within a column of a hook composition tableau would not satisfy the triple rule, the unprimed entries in the diagram $f^{-1}(U)$ must be strictly increasing from bottom to top.  We must therefore prove that if two entries, $x$ and $y$, are in the same row  of $f^{-1}(U)$ with $x$ immediately to the left of $y$, then $x\leq y$ with strict inequality if $x \in \mathcal{A}'$.  Argue by contradiction.  Assume first that there exists a row in $f^{-1}(U)$ in which $x$ is immediately to the left of $y$ but $x > y$.  Choose the leftmost column $c$ in $f^{-1}(U)$ in which such an $x$ exists, and the lowest row $r$ containing this situation with $x$ in column $c$.  Then column $c$ contains only $r-1$ entries which are less than or equal to $y$ while column $c+1$ contains $r$ entries less than or equal to $y$.  Since the column entries in the hook composition tableau $U$ are the same as the column entries in $f^{-1}(U)$, this implies that one of the entries less than or equal to $y$ in column $c+1$ of the hook composition tableau must lie immediately to the right of an entry that is greater than $y$, which contradicts the definition of a hook composition tableau.  


Next assume there exists a row in $f^{-1}(U)$ in which $x$ is immediately to the left of $y$ and $x = y$ but $x=y$ is primed.  Again, select the leftmost column $c$ in $f^{-1}(U)$ containing such an $x$, and the lowest row $r$ containing this situation.  Again, the column $c$ contains only $r-1$ entries which are less than $y$ while column $c+1$ contains $r$ entries less than or equal to $y$.  Since the column entries in the hook composition tableau $U$ are the same as the column entries in $f^{-1}(U)$, this implies that one of the entries less than or equal to $y$ in column $c+1$ of the hook composition tableau must lie immediately to the right of an entry that is greater than or equal to $y$, which contradicts the definition of a hook composition tableau.  
\end{proof}

\begin{figure}
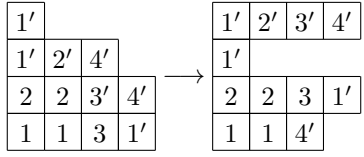

$$ \tableau{1' \\ 1' & 2' & 4' \\ 2 & 2 & 3' & 4' \\ 1 & 1 & 3 & 1' } \longrightarrow \tableau{1' & 2' & 3' & 4' \\ 1' \\ 2 & 2 & 3 & 1' \\ 1 & 1 & 4'}$$
\caption{The bijection $f$ maps a semistandard hook tableau of shape $(4,4,3,1)$ to a hook composition tableau of shape $(3,4,1,4)$.}
\label{TheMap}
\end{figure}

Notice that each hook composition tableau appearing in a given quasisymmetric hook Schur function can be broken into its row-strict portion and its column-strict portion.  We may therefore decompose each quasisymmetric hook Schur function into a sum of products of quasisymmetric Schur functions and skew row-strict quasisymmetric Schur functions as follows: $$\mathcal{HQ}_{\alpha}(X;Y) = \sum_{\beta \subseteq \alpha} \mathcal{CS}_{\beta} (X) \mathcal{RS}_{\alpha // \beta} (Y).$$

This is analogous to the decomposition of the hook Schur functions into sums of products of Schur functions and skew Schur functions \cite{BerRem85} given by  $$HS_{\lambda} (X; Y) = \sum_{\mu \subseteq \alpha} s_{\mu}(X) s_{\lambda' / \mu'} (Y).$$

However, some other quasisymmetric analogies of straightforward results about hook Schur functions do not carry through as directly.  For example, one can see that \begin{equation}\label{transpose} HS_{\lambda} (X;Y) = HS_{\lambda'} (Y;X)\end{equation} by taking the transpose of each generating semistandard hook tableau \cite{BerRem85}.  However, taking the transpose of a composition (which rearranges to a partition $\lambda$) using the standard method (writing it as a ribbon and then transposing the ribbon and recording the underlying composition) does not produce a composition which rearranges the transpose of the partition $\lambda$ as seen in Figure~\ref{fig:transpose}.  However, a related result can be obtained once we define a ``row-strict'' analogue to hook composition tableaux.

\begin{figure}
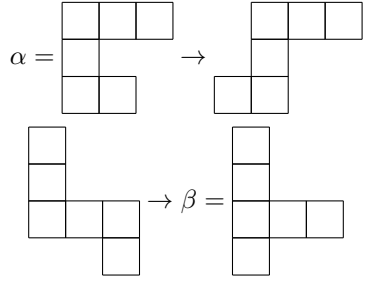

\[\alpha =  \tableau{{}&{}&{}\\{}\\{}&{}}  \rightarrow  \tableau{&{}&{}&{}\\&{}\\{}&{}} \]
\[ \tableau{{}\\{}\\{}&{}&{}\\&&{}} \rightarrow \beta = \tableau{{}\\{}\\{}&{}&{}\\{}}\]
\caption{The composition $\alpha = (2,1,3)$ has underlying partition $\lambda=(3,2,1)$.  The transpose of $\alpha$ obtained by writing as a ribbon, transposing, and identifying the underlying composition is $\beta=(1,3,1,1)$.  The underlying partition of $\beta$ is $(3,1,1,1)\neq \lambda'$.}\label{fig:transpose}
\end{figure}

\begin{definition}\label{def:rowstrictHCT}
As before, let $\mathcal{A}=\{1,2,\ldots, k\}$ and $\mathcal{A}'=\{1',2',\ldots, l'\}$ with total ordering $1<2<\cdots<k<1'<2'\cdots<l'$ on $\mathcal{A}\cup\mathcal{A}'$.  Given a composition diagram $\alpha=(\alpha_1, \alpha_2, \hdots , \alpha_r)$ with largest part $m$, a {\it row-strict hook composition tableau} (RHCT), $F$, is a filling of the cells of $\alpha$ with letters from $\mathcal{A}\cup\mathcal{A'}$ such that
\begin{enumerate}
\item the entries of $F$ weakly increase in each row when read from left to right,
\item the unprimed (resp. primed) entries of $F$ strictly (resp. weakly) increase in each row 
when read from left to right,
\item the unprimed (resp. primed) entries in the leftmost column of $F$ weakly (resp. strictly) increase when read from bottom to top, 
\item and $F$ satisfies the following {\it triple rule}:

Supplement $F$ by adding enough cells with infinity-valued entries to the end of each row so that the resulting supplemented tableau, $\hat{F}$, is of rectangular shape $r \times m$.  Then for $1 \le i < j \le r, \; \; 2 \le n \le m,$ where $\hat{F}(i,n)$ denotes the entry of $\hat{F}$ that lies 
in the cell in the $i$-th row from the bottom and $n$-th column from the left,
\begin{enumerate}[label=(\alph*)]
\item if $\hat{F}(i, n+1) \in \mathcal{A}'$ and $\hat{F}(i,n+1) \ge \hat{F}(j,n),$ then $\hat{F}(i,n+1) > \hat{F}(j,n+1),$ and
\item if $\hat{F}(i, n+1) \in \mathcal{A}$ and $\hat{F}(i,n+1) > \hat{F}(j,n),$ then $\hat{F}(i,n+1) \ge \hat{F}(j,n+1).$
\end{enumerate}
\end{enumerate}
\end{definition}

Note that this definition switches the role of the primed alphabet with that of the unprimed alphabet.  Let $RHCT(\alpha)$ denote the set of row-strict hook composition tableaux of shape $\alpha$.  Then we can define a {\em row-strict quasisymmetric $(k,l)$-hook Schur function} as 
\[\mathcal{RHQ}_\alpha (X;Y) = \sum_{F\in RHCT(\alpha)} x_1^{v_1}\cdots x_k^{v_k} y_1^{u_1}\cdots y_l^{u_l}\] where $v_i$ is the number of times $i$ appears in $F$ and $u_i$ is the number of times $i'$ appears in $F$.  

\begin{theorem}\label{thm:transpose}
For $\lambda \vdash n$, 
\[\sum_{\boldsymbol{\lambda}(\alpha) = \lambda} \mathcal{HQ}_\alpha(X;Y) = \sum_{\boldsymbol{\lambda}(\beta) = \lambda'} \mathcal{RHQ}_\beta (X;Y).\]
\end{theorem}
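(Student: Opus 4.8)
The plan is to show that both sides equal the hook Schur function $HS_\lambda(X;Y)$. The left-hand side is immediate from Theorem~\ref{thm:hookSchur}, which gives $\sum_{\boldsymbol{\lambda}(\alpha)=\lambda}\mathcal{HQ}_\alpha(X;Y)=HS_\lambda(X;Y)$. Thus the real content is the row-strict companion
\[\sum_{\boldsymbol{\lambda}(\beta)=\lambda'}\mathcal{RHQ}_\beta(X;Y)=HS_\lambda(X;Y),\]
which I would prove in two steps.

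First I would establish a row-strict analogue of Theorem~\ref{thm:hookSchur}. Define a \emph{row-strict semistandard hook tableau} (RHT) of shape $\mu$ to be a filling of the Young diagram of $\mu$ with letters from $\mathcal{A}\cup\mathcal{A}'$ in which the unprimed entries increase strictly along rows and weakly up columns while the primed entries increase weakly along rows and strictly up columns, and let $RHS_\mu(X;Y)$ be its content generating function. Since Definition~\ref{def:rowstrictHCT} is obtained from Definition~\ref{def:HCT} by interchanging the roles of the two alphabets, the bijection $f$ and its inverse from the proof of Theorem~\ref{thm:hookSchur} carry over after this interchange, yielding $RHS_\mu(X;Y)=\sum_{\boldsymbol{\lambda}(\beta)=\mu}\mathcal{RHQ}_\beta(X;Y)$. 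Taking $\mu=\lambda'$ reduces the theorem to the single identity $RHS_{\lambda'}(X;Y)=HS_\lambda(X;Y)$.

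This last identity I would prove by transposing tableaux. The transpose sends the cell $(i,j)$ to $(j,i)$, interchanging rows with columns while preserving the entry in each cell and hence the associated monomial. Tracking the monotonicity conditions through this swap, the unprimed entries of a semistandard hook tableau of shape $\lambda$—weakly increasing along rows and strictly up columns—become strictly increasing along rows and weakly up columns, and the primed entries pass from strictly along rows and weakly up columns to weakly along rows and strictly up columns. These are exactly the defining conditions of an RHT of shape $\lambda'$, and the map is an involution between the two sets, so $HS_\lambda(X;Y)=RHS_{\lambda'}(X;Y)$. This is the tableau-level transpose underlying~\eqref{transpose}, now read with the variable sets held fixed rather than exchanged. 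Combining the three displayed equalities proves the theorem.

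I expect the main difficulty to be bookkeeping rather than conceptual. One must confirm that interchanging the two alphabets genuinely converts the proof of Theorem~\ref{thm:hookSchur} into a valid proof of its row-strict analogue—in particular that the triple-rule contradiction arguments and the analysis of $f^{-1}$ survive the systematic swap of strict and weak inequalities—and that the French-notation direction conventions are tracked consistently under transposition. Neither point raises a new obstacle, so the proof amounts to assembling these two standard-style arguments.
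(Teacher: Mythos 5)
Your proposal is correct, but it follows a genuinely different route from the paper's. The paper proves the identity by a single direct bijection $\hat{\phi}$ from the hook composition tableaux of shapes rearranging to $\lambda$ to the row-strict hook composition tableaux of shapes rearranging to $\lambda'$, defined entirely at the composition level: the $j$-th smallest entries of the columns of $F$ are placed, smallest to largest, into the highest rows of $\hat{\phi}(F)$ compatible with the row rules (the hook analogue of the map $\phi$ of Mason--Remmel), and the triple rules are then verified directly. You instead show both sides equal $HS_\lambda(X;Y)$: the left side by Theorem~\ref{thm:hookSchur}, and the right side by (i) a row-strict analogue of Theorem~\ref{thm:hookSchur} relating your row-strict hook tableaux to $\mathcal{RHQ}_\beta$, and (ii) the classical transpose bijection between semistandard hook tableaux of shape $\lambda$ and row-strict hook tableaux of shape $\lambda'$. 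Both arguments are sound, and the verification burden is comparable: your step (i) is not a formal relabeling (the total order still places $\mathcal{A}$ below $\mathcal{A}'$), so it requires redoing the triple-rule and $f^{-1}$ arguments with strict and weak inequalities exchanged, which is roughly the work the paper spends verifying $\hat{\phi}$. What the paper's approach buys is an explicit composition-level transpose operation, which the authors highlight as a ``direct method for taking the transpose of a hook composition tableau''; what your approach buys is the identification of the right-hand side with $HS_\lambda(X;Y)=HS_{\lambda'}(Y;X)$, a fact the paper obtains only as a corollary of the theorem combined with Equation~\ref{transpose} --- in effect you reverse the paper's logical flow. It is worth noting that composing your three bijections (the inverse of $f$, the transpose, and the row-strict version of $f$) essentially recovers the paper's map $\hat{\phi}$, since the $j$-th column of the transposed tableau consists precisely of the $j$-th smallest entries of the columns of the original hook composition tableau.
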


Before we prove Theorem~\ref{thm:transpose}, we note that the left hand side is equal to $\mathcal{HS}_{\lambda}(X;Y)$ by Theorem~\ref{thm:hookSchur}.  Theorem~\ref{thm:transpose} together with Equation~\ref{transpose} therefore imply that the right hand side is equal to $\mathcal{HS}_{\lambda'}(Y;X)$.  The bijective proof of Theorem~\ref{thm:transpose} given below provides a direct method for taking the transpose of a hook composition tableau.

\begin{proof}{(of Theorem~\ref{thm:transpose})} We define a map $\hat{\phi}:HCT(\alpha: \boldsymbol{\lambda}(\alpha)= \lambda) \rightarrow RHCT(\beta: \boldsymbol{\lambda}(\beta) = \lambda')$.  This map is analogous to the map $\phi$ in \cite{MasRem10} taking column-strict composition tableaux to row-strict composition tableaux.  Given $F\in HCT(\alpha)$, construct $\hat{\phi}(F)$ by first taking the smallest entry in each column of $F$ and placing the entries in the first column of $\hat{\phi}(F)$ from bottom to top, smallest to largest.  Then take the next smallest entry in each column of $F$ and place these entries, smallest to largest, in the highest row of $\hat{\phi}(F)$ satisfying the row increasing conditions (i.e., strict for unprimed entries, weak for primed entries).  Continue likewise until all entries have been used.  The result of $\hat{\phi}$ can be observed in Fig.~\ref{fig:phi}.  

To see that $\hat{\phi}(F)$ is a RHCT, note that the row-increasing rules are met by construction, as is the increasing first column.  It remains to show that the triple rules are satisfied.  


Consider a triple of entries $a=\hat{\phi}(F)(i,k+1), b=\hat{\phi}(F)(j,k),$ and $c=\hat{\phi}(F)(j,k+1)$ with $i<j$, as depicted in Figure~\ref{fig:triple}. First suppose $a\in \mathcal{A}$ and $a>b$.  Since $a>b$, but $a$ was not placed in cell $(j,k+1)$, $c$ must have been placed prior to $a$, and thus $a\geq c$.  Next, suppose $a\in \mathcal{A}'$ and $a\geq b$.  Since $a\geq b$ and was not placed in cell $(j,k+1)$, $c$ must have been placed earlier than $a$.  Thus, $c\leq a$.  But, if $c = a$, then in $F$, $c$ and $a$ were both the $k+1$st smallest entries in their respective columns.  But, since $F$ is row-strict in the primed entries, there are no repeated primed entries that are both the $k+1$st smallest entries in their respective columns.  Thus, $a>c$ and both triple rules are preserved.

We can define $\hat{\phi}^{-1}$ on an RHCT $T$ by taking the smallest entry in each column of $T$ and placing the entries in the first column of $\hat{\phi}^{-1}(T)$ from bottom to top, smallest to largest.  Then take the next smallest entry in each column of $T$ and place them into the second column of $\hat{\phi}^{-1}(T)$, from smallest to largest, in the highest row of $\hat{\phi}^{-1}(T)$ satisfying the row increasing conditions, weakly increasing for unprimed entries and strictly increasing for primed entries.  Continue likewise until all entries have been used.  The proof that the resulting diagram satisfies the triple conditions is completely analogous to the proof above.
\end{proof}

\begin{figure}
\[F = \tableau{1'&2'&3'\\4&4&1'&2'\\3&2'\\2&2&3&1'\\1&1&1} \qquad \hat{\phi}(F) = \tableau{1'&2'\\1&2&3&4&1'\\1&2&4&2'&2'\\1&3&1'&3'}\]
\caption{The result of applying $\hat{\phi}$ to an HCT.}\label{fig:phi}
\end{figure}

\section{Super fundamental quasisymmetric functions}\label{section:fun}

Both the column-strict and row-strict quasisymmetric functions can be decomposed as sums of {\em fundamental} quasisymmetric functions as shown in \cite{LMvW13,MN-SkewRS}.  Let $D\subseteq [n-1]$.  Then the fundamental quasisymmetric function indexed by $D$ is 
\[F_{n,D} (X) = \sum_{\substack{a_1\leq a_2\leq \cdots \leq a_n\\ a_i = a_{i+1} \Rightarrow i \notin D}} x_{a_1}x_{a_2}\cdots x_{a_n}.\]

To write $\mathcal{CS}_\alpha$ as a sum of fundamental quasisymmetric functions, we first need to define a standardization map for SSYCT.  Given a SSYCT $T$, we standardize by reading up each column from the left column to the right, replacing the first 1 with 1, the second 1 with 2, etc., then replacing the first 2 with $\alpha_1+1$, continuing likewise.  The result is $std(T)$, a {\em standard Young composition tableau} (SYCT); that is, a SSYCT with $n$ cells containing the entries $1, \ldots,n$ each used exactly once.  The {\em descent set} of a SYCT $F$ is $D(F)=\{i:i+1$ is weakly left of $i$ in $F\}$.  Then, for $\alpha \vDash n$, 
\begin{equation}\label{eq:colfund}
\mathcal{CS}_\alpha (X) = \sum_{T \in SYCT(\alpha)} F_{n, D(T)}.\end{equation}

Standardization for SSYRT works similarly.  Given a SSYRT $T$, standardize by reading up each column from right to left, replacing the first 1 with 1, the second with 2, etc., continuing as before with this reading order.  The result is $st(T)$, a {\em standard Young row-strict composition tableau} (SYRT), that is, a SSYRT with $n$ cells containing the entries $1,\ldots,n$ each used exactly once.  The {\em descent set} of a SYRT $G$ is $\hat{D}(G) = \{i:i+1$ is strictly right of $i$ in $G\}$.  Then, for $\alpha \vDash n$, 
\begin{equation}\label{eq:rowfund}
\mathcal{RS}_\alpha(X) = \sum_{T \in SYRT(\alpha)} F_{n, \hat{D}(T)}.\end{equation}

In \cite{HHLRU} a {\em super fundamental quasisymmetric function} is introduced.  
\[\tilde{Q}_{n,D}(X,Y) = \sum_{\substack{a_1\leq a_2\leq \cdots \leq a_n\\ a_i=a_{i+1} \in \mathcal{A} \Rightarrow i \notin D\\ a_i = a_{i+1} \in \mathcal{A}' \Rightarrow i \in D}} z_{a_1}z_{a_2}\cdots z_{a_n}\]
where $z_a = x_a$ for $a \in \mathcal{A}$ and $z_{a'} = y_a$ for $a' \in \mathcal{A}'$.  In this section we show that the quasisymmetric $(k,l)$-hook Schur functions can be decomposed into the super fundamental quasisymmetric functions when the alphabet $\mathcal{A} = 1<2<\hdots< k < \mathcal{A}' = 1'<2'<\hdots<l'$ and appropriate indexing sets are used as considered in \cite{Kwo09}.  

We can write the $\tilde{Q}_{n,D}$ in terms of the fundamental quasisymmetric functions.  First we define the {\em composition corresponding to a set $S$}, denoted $\comp(S)$ (where $S=\{s_1,s_2,\hdots,s_k\}\subseteq \{1, \hdots, n-1\}$), given by $\comp(S)=(s_1,s_2-s_1,\hdots,s_k-s_{k-1},n-s_k)$.  Note that $\comp(S)$ is a composition of $n$.  Given two compositions $\alpha=(\alpha_1,\hdots,\alpha_k)$ and $\beta=(\beta_1,\hdots,\beta_m)$, the {\em concatenation} of $\alpha$ and $\beta$ is given by $\alpha \cdot \beta = (\alpha_1,\hdots,\alpha_k,\beta_1,\hdots,\beta_m)$, while the {\em almost concatenation} of $\alpha$ and $\beta$ is given by $\alpha\odot\beta=(\alpha_1,\hdots,\alpha_{k-1},\alpha_k+\beta_1, \beta_2,\hdots,\beta_m)$.  In the following, $\overline{D}$ is the set complement of $D$.

\begin{theorem}\label{lem:superfund}
Let $D\subseteq [n-1]$.  Then 
\begin{equation}\label{eq:superfund}
\tilde{Q}_{n,D}(X,Y) = \sum_{i=0}^n F_{i,D_1}(X) F_{n-i, D_2}(Y)
\end{equation}
where $\boldsymbol{\beta}(D_1)\cdot \boldsymbol{\beta}(\overline{D_2}) = \boldsymbol{\beta}(D)$ if $i \in D$ and $\boldsymbol{\beta}(D_1) \odot \boldsymbol{\beta}(\overline{D_2}) = \boldsymbol{\beta}(D)$ if $i \notin D$.
\end{theorem}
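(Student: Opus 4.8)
The plan is to partition the defining sum of $\tilde{Q}_{n,D}$ according to how many of the letters in a weakly increasing word $a_1 \le a_2 \le \cdots \le a_n$ are unprimed. Because the total order places every letter of $\mathcal{A}$ below every letter of $\mathcal{A}'$, each such word decomposes uniquely as an unprimed prefix $a_1 \le \cdots \le a_i$ (letters in $\mathcal{A}$) followed by a primed suffix $a_{i+1} \le \cdots \le a_n$ (letters in $\mathcal{A}'$), for a unique $i$ with $0 \le i \le n$. First I would fix $i$ and observe that the monomial $z_{a_1} \cdots z_{a_n}$ factors as an $X$-monomial coming from the prefix times a $Y$-monomial coming from the suffix, so the whole sum rearranges into a sum over $i$ of a product of two independent sums.

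Next I would analyze the constraints imposed by $D$ in the three regions determined by the split. For $1 \le j \le i-1$ both $a_j$ and $a_{j+1}$ lie in $\mathcal{A}$, so the only active condition is $a_j = a_{j+1} \Rightarrow j \notin D$; this is exactly the defining condition of $F_{i, D_1}(X)$ with $D_1 = D \cap [i-1]$. At the junction $j=i$ the letters $a_i \in \mathcal{A}$ and $a_{i+1} \in \mathcal{A}'$ are automatically distinct, so no equality condition is imposed there. For $i+1 \le j \le n-1$ both letters lie in $\mathcal{A}'$, and here the condition is the reversed one, $a_j = a_{j+1} \Rightarrow j \in D$. After re-indexing the suffix by $m = j-i$ and recording its $Y$-indices, this reversed condition becomes the standard condition $b_m = b_{m+1} \Rightarrow m \notin D_2$ precisely when $D_2 = \{m : i+m \notin D\}$, equivalently $\overline{D_2} = (D \cap \{i+1,\ldots,n-1\}) - i$; that is, the suffix sum equals $F_{n-i, D_2}(Y)$. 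Combining the three regions yields $\tilde{Q}_{n,D}(X,Y) = \sum_{i=0}^n F_{i,D_1}(X) F_{n-i, D_2}(Y)$.

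It then remains to confirm that the descent sets $D_1$ and $D_2$ identified above are exactly those prescribed by the composition identities in the statement. Writing $D \cap [i-1] = \{d_1 < \cdots < d_s\}$ and $D \cap \{i+1,\ldots,n-1\} = \{i+e_1 < \cdots < i+e_t\}$, I would compute $\boldsymbol{\beta}(D)$, $\boldsymbol{\beta}(D_1)$ (a composition of $i$), and $\boldsymbol{\beta}(\overline{D_2})$ (a composition of $n-i$, with parts $e_1, e_2-e_1, \ldots, (n-i)-e_t$) directly from the gap definition. When $i \in D$ there is a part boundary of $\boldsymbol{\beta}(D)$ at position $i$, so the parts of $\boldsymbol{\beta}(D)$ split cleanly into those of $\boldsymbol{\beta}(D_1)$ followed by those of $\boldsymbol{\beta}(\overline{D_2})$, giving the concatenation $\boldsymbol{\beta}(D_1) \cdot \boldsymbol{\beta}(\overline{D_2})$. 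When $i \notin D$ the part of $\boldsymbol{\beta}(D)$ straddling position $i$ has length $(i-d_s) + e_1$, which is exactly the part produced by merging the last part of $\boldsymbol{\beta}(D_1)$ with the first part of $\boldsymbol{\beta}(\overline{D_2})$, i.e. the almost concatenation $\boldsymbol{\beta}(D_1) \odot \boldsymbol{\beta}(\overline{D_2})$. This is the standard dictionary translating the presence or absence of a descent at the splitting point into concatenation versus almost concatenation.

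I expect the main obstacle to be the reversed descent condition for the primed alphabet: because equal consecutive primed letters force $j \in D$ rather than $j \notin D$, the suffix is governed by the complement $\overline{D_2}$ rather than $D_2$, and this complementation must be tracked carefully so that the composition identities emerge with $\overline{D_2}$ exactly as stated. The remaining bookkeeping---the degenerate cases $i=0$ and $i=n$, where one factor is the empty word with $F_{0,\emptyset}=1$, together with the convention that the almost concatenation of an empty composition with $\gamma$ returns $\gamma$---is routine and can be dispatched by noting that $0, n \notin [n-1]$, so those boundary terms always fall into the $i \notin D$ case.
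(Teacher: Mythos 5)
Your proposal is correct and takes essentially the same approach as the paper's proof: both split each monomial of $\tilde{Q}_{n,D}$ by the number of unprimed letters, identify $D_1$ as the part of $D$ below the split and $\overline{D_2}$ as the shifted part of $D$ above it, and verify the concatenation versus almost-concatenation identity according to whether the split point lies in $D$. Your extra attention to the junction position and the boundary cases $i=0,n$ only makes explicit what the paper leaves implicit.
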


\begin{proof}
Each monomial in $\tilde{Q}_{n,D}$ has the form $x_{a_1}x_{a_2}\cdots x_{a_k}y_{b_1}\cdots y_{b_{n-k}}$ where $a_1\leq a_2\leq \cdots\leq a_k$, $b_1\leq b_2\leq\cdots\leq b_{n-k}$.  Note that $x_{a_i}=x_{a_{i+1}}$ implies $i \notin D$ and $y_{b_j}=y_{b_{j+1}}$ implies $j + k \in D$, which occurs when $j+k \notin \overline{D}$.  We fix a $k$ and show that the monomials with degree $k$ in the $x$'s are precisely those appearing in $F_{i,D_1}(X) F_{n-i, D_2}(Y)$ for certain pairs of sets $D_1, D_2$ described below. 

Suppose $k \in D$.  Then $D = \{d_1, d_2, \ldots, d_i=k, \ldots, d_m\} \subseteq [n-1]$.  Let $D_1 = \{d_1, \ldots, d_{i-1}\} \subseteq [k-1]$ and $D_2 = [n-k-1] \setminus \{d_{i+1}-k, \ldots, d_m-k\}$, so $\overline{D_2} = \{d_{i+1}-k, \ldots, d_m - k\}$.  Then 
\[\boldsymbol{\beta}(D) = (d_1, d_2-d_1, \ldots, k-d_{i-1}, d_{i+1}-k, \ldots, d_m-d_{m-1}, n-d_m),\]
 while \[\boldsymbol{\beta}(D_1) = (d_1, d_2-d_1, \ldots, d_{i-1}-d_{i-2}, k-d_{i-1})\] and \begin{align*} \boldsymbol{\beta}(\overline{D_2}) &= (d_{i+1}-k,( d_{i+2}-k)-(d_{i+1}-k), \ldots, n-k-(d_m-k))\\& = (d_{i+1}-k, d_{i+2}-d_{i+1}, \ldots, d_m-d_{m-1}, n-d_m).\end{align*}
Thus, $\boldsymbol{\beta}(D) = \boldsymbol{\beta}(D_1)\cdot \boldsymbol{\beta}(\overline{D_2})$.

Now, suppose $k \notin D$.  Then $D = \{d_1, d_2, \ldots, d_m\}$ and there is some $i$ such that $d_i<k<d_{i+1}$.  Set $D_1 = \{d_1, \ldots, d_i\} \subseteq [k-1]$ and $D_2 = [n-k-1]\setminus \{d_{i+1} - k, \ldots, d_m-k\}$, so $\overline{D_2}=\{d_{i+1} - k,\ldots, d_m - k\}$.  Then 
\[\boldsymbol{\beta}(D) = (d_1, d_2-d_1, \ldots, d_i-d_{i-1}, d_{i+1}-d_i, \ldots, d_m-d_{m-1}, n-d_m),\]
while
\[ \boldsymbol{\beta}(D_1) = (d_1, d_2-d_1, \ldots, d_i-d_{i-1}, k-d_i)\] and
\[\boldsymbol{\beta}(\overline{D_2}) = (d_{i+1}-k, d_{i+2}-d_{i+1}, \ldots, d_m-d_{m-1}, n-d_m).\]
Thus, 
\begin{align*}\boldsymbol{\beta}(D) &= (d_1, d_2-d_1, \ldots, (k-d_i)+(d_{i+1}-k), \ldots, n-d_m)\\
& = \boldsymbol{\beta}(D_1) \odot \boldsymbol{\beta}(\overline{D_2}).\qedhere\end{align*} 
\end{proof}

To {\em standardize} a HCT with content $(\alpha_1,\alpha_2,\ldots, \alpha_k)$ in the unprimed entries and $(\beta_1,\beta_2,\ldots,\beta_l)$ in the primed entries, start first with the unprimed entries.  Reading up each column, from left to right, replace the first 1 with 1, the second 1 in this reading order with 2, etc., then replace the first 2 with $\alpha_1+1$, and continue likewise.  In the primed entries, read up each column, right to left, replacing the first $1'$ with $\sum_i a_i +1$, the second $1'$ with $\sum_i a_i+2$, etc., then replacing the first $2'$ with $\sum_i a_i+\beta_1+1$, and so on.  This process can be seen in Fig.~\ref{fig:superstdz}.  The {\em descent set} for a standard composition filling $T$ is $D(T) = \{ i \in [n-1]: i+1 \text{ is weakly left of } i \}$.  Denote the set of {\em standard hook composition tableaux} of shape $\alpha$ by $SHCT(\alpha)$ and denote the standardization of an HCT $S$ by $stdz(S)$.  Note that these are equivalent to standard composition tableaux and satisfy the triple rules from Definition~\ref{def:HCT}.  

\begin{figure}
\[\begin{array}{cc}
T = \tableau{1'&2'&3'\\4&4&1'&2'\\3&2'\\2&2&3&1'\\1&1&1} & stdz(T) = \tableau{12&15&16\\8&9&11&13\\6&14\\4&5&7&10\\1&2&3}\\\\
& D(stdz(T)) = \{3,5,7,10,11,13,14\}\end{array}
\]
\caption{Standardization of HCT} \label{fig:superstdz}
\end{figure}

\begin{theorem}\label{thm:qhookfund} For $\alpha \vDash n$, 
\[\mathcal{HQ}_\alpha (X;Y) = \sum_{T \in SHCT(\alpha)} \tilde{Q}_{n, D(T)}(X;Y).\]
\end{theorem}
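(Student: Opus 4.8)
The plan is to deduce the statement from the product decomposition
\[\mathcal{HQ}_\alpha(X;Y) = \sum_{\beta \subseteq \alpha} \mathcal{CS}_\beta(X)\,\mathcal{RS}_{\alpha//\beta}(Y)\]
recorded after Theorem~\ref{thm:hookSchur}, expand each factor into fundamental quasisymmetric functions using \eqref{eq:colfund} and the skew analogue of \eqref{eq:rowfund}, and then recombine the resulting products into super fundamental quasisymmetric functions by applying Theorem~\ref{lem:superfund}. Concretely, I would first write
\[\mathcal{HQ}_\alpha(X;Y) = \sum_{\beta \subseteq \alpha} \Big(\sum_{U \in SYCT(\beta)} F_{|\beta|, D(U)}(X)\Big)\Big(\sum_{V \in SYRT(\alpha//\beta)} F_{n-|\beta|, \hat{D}(V)}(Y)\Big),\]
so that $\mathcal{HQ}_\alpha$ is presented as a sum of products $F_{i,D(U)}(X)\,F_{n-i,\hat{D}(V)}(Y)$ indexed by triples $(\beta,U,V)$ with $i=|\beta|$.

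The combinatorial core is a bijection between these triples and pairs $(T,i)$ with $T \in SHCT(\alpha)$ and $0 \le i \le n$. In one direction, given $(\beta,U,V)$ I would place the entries of $U$ (the labels $1,\ldots,|\beta|$) on the sub-diagram $\beta$ and the shifted entries of $V$ (the labels $|\beta|+1,\ldots,n$) on $\alpha//\beta$, producing a standard filling $T$; this is exactly the standard-level shadow of the decomposition of an HCT into its unprimed column-strict portion and its primed row-strict portion, and the HCT standardization restricts to the $SYCT$ standardization on $U$ and the $SYRT$ standardization on $V$ because the two reading orders used in $stdz$ agree with those used in \eqref{eq:colfund} and \eqref{eq:rowfund}. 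In the reverse direction, given $(T,i)$ I would cut $T$ at the threshold $i$: since the rows of a standard filling strictly increase, the cells carrying labels $\le i$ form a left-justified sub-composition $\beta \subseteq \alpha$, and restricting $T$ yields $U \in SYCT(\beta)$ and $V \in SYRT(\alpha//\beta)$. The point that makes the re-indexing exact is that \emph{every} threshold $0 \le i \le n$ produces a valid cut, so each fixed $T$ is hit once for each $i$.

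It then remains to match descent statistics with the notation of Theorem~\ref{lem:superfund}. Writing $D = D(T) = \{j : j+1 \text{ is weakly left of } j \text{ in } T\}$, I would check that $D(U) = D \cap [i-1]$ coincides with the set $D_1$ attached to the $i$-th splitting of $D$, and that $\overline{D_2} = \{d-i : d \in D,\ d>i\}$ translates, under the complementation relating the weakly-left convention defining $D$ to the strictly-right convention defining $\hat{D}$, into $\hat{D}(V) = D_2$; this is precisely the bookkeeping already carried out in the proof of Theorem~\ref{lem:superfund}. With $D(U)=D_1$ and $\hat{D}(V)=D_2$ in hand, summing the products $F_{i,D_1}(X)\,F_{n-i,D_2}(Y)$ over all thresholds $0 \le i \le n$ for a fixed $T$ reproduces the right-hand side of \eqref{eq:superfund}, which equals $\tilde{Q}_{n,D(T)}(X;Y)$. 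Summing over $T \in SHCT(\alpha)$ then gives the claim.

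The step I expect to be the main obstacle is verifying the exactness of the reverse map, namely that cutting an arbitrary standard hook composition tableau at \emph{any} threshold $i$ yields genuinely valid objects. One must show that the low-label region is a left-justified composition shape on which the $SYCT$ triple condition holds and that the high-label region is a valid skew $SYRT$; the delicate part is that the standard triple rule of $T$ must force the two sub-tableau triple rules once the large labels are read as $\infty$ for $U$ and the small labels are read as $0$ for $V$, mirroring the supplementation conventions in Definitions~\ref{SSYCTdef} and~\ref{SSYRTdef}. Confirming that the boundary label $i$ never imposes an equality constraint (its two neighbors lie in different alphabets), handling the degenerate thresholds $i=0$ and $i=n$, and checking consistency with the skew row-strict fundamental expansion cited from \cite{MN-SkewRS} are the remaining details.
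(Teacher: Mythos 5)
Your proposal is correct, but it takes a genuinely different route from the paper. The paper proves this theorem directly at the monomial level: given an HCT $S$, it checks that the content monomial of $S$ satisfies the defining inequalities of $\tilde{Q}_{n,D(stdz(S))}$ (column-strictness of the unprimed entries forces $a_i=a_{i+1}\Rightarrow i\notin D$, row-strictness of the primed entries forces $b_i=b_{i+1}\Rightarrow i+k\in D$), and conversely that reverse standardization applied to any monomial of $\tilde{Q}_{n,D(T)}$ produces a valid HCT; notably, the paper's proof never invokes Theorem~\ref{lem:superfund}. You instead assemble the statement from three prior results --- the decomposition $\mathcal{HQ}_\alpha=\sum_{\beta\subseteq\alpha}\mathcal{CS}_\beta(X)\,\mathcal{RS}_{\alpha//\beta}(Y)$, the fundamental expansions \eqref{eq:colfund} and the skew analogue of \eqref{eq:rowfund} from \cite{MN-SkewRS}, and Theorem~\ref{lem:superfund} --- with the combinatorial core being the threshold-cut bijection $(\beta,U,V)\leftrightarrow(T,i)$. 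That bijection does go through: since the leftmost column and the rows of a standard filling strictly increase, the cells of $T$ with labels $\le i$ form a bottom- and left-justified subdiagram $\beta\subseteq\alpha$, and the case analysis on triples (reading labels $>i$ as $\infty$ for $U$ and labels $\le i$ as $0$ for $V$, and vice versa for the overlay direction) works exactly as you anticipate; your descent bookkeeping $D(U)=D(T)\cap[i-1]$ and $\hat{D}(V)=\{\,j\in[n-i-1]:j+i\notin D(T)\,\}=D_2$ matches the sets constructed in the proof of Theorem~\ref{lem:superfund}, the weakly-left versus strictly-right conventions being precisely why $D_2$ rather than $\overline{D_2}$ appears. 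What your route buys is that Theorem~\ref{lem:superfund} actually does work (in the paper it is proved but then sits unused in the proof of this theorem), and it makes explicit the compatibility between the column-strict/row-strict factorization of $\mathcal{HQ}_\alpha$ and standardization; the cost is reliance on the product decomposition, which the paper states without detailed proof, and on the skew row-strict expansion imported from \cite{MN-SkewRS}, whereas the paper's standardization argument is self-contained.
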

\begin{proof}
We proceed by showing that the set of monomials in $\mathcal{HQ}_\alpha(X;Y)$ is the same as the set of monomials in $\sum_{T\in SHCT(\alpha)} \tilde{Q}_{n,D(T)}(X;Y)$.  First, suppose that $x_{a_1}\cdots x_{a_k} y_{b_1}\cdots y_{b_l}$ (where $k+l =n$) is the content monomial associated with a HCT $S$ of shape $\alpha$ with subscripts arranged in weakly increasing order.  Let $T = stdz(S)$.  To show that the content monomial appears in $\tilde{Q}_{n,D(T)}(X;Y)$, we must show that it satisfies the two conditions: if $a_i=a_{i+1}$ then $i \notin D(T)$ and if $b_i=b_{i+1}$, then $i+k \in D(T)$.  Both follow almost immediately from the standardization procedure.  First note that if $a_i=a_{i+1}$ then $a_i$ and $a_{i+1}$ are in distinct columns of $S$ since $S$ is column-strict in the unprimed entries.  Thus, when standardizing, $i+1$ is placed in a column strictly to the right of $i$.  Thus $i \notin D(T)$.  Similarly, if $b_i = b_{i+1}$, then during standardization, $i+k+1$ will appear weakly left of $i+k$ following the rules for standardizing primed entries.  Thus, $i+k \in D(T)$.  Therefore, 
$x_{a_1}\cdots x_{a_k} y_{b_1}\cdots y_{b_l}$ is in $\tilde{Q}_{n,D(T)}(X;Y)$.

Now, suppose $x_{a_1}\cdots x_{a_k} y_{b_1}\cdots y_{b_l}$ is a monomial in $\tilde{Q}_{n,D(T)}(X;Y)$.  To find a HCT $S$ of shape $\alpha$, we reverse the standardization map.  That is, replace $n$ by $b_l$, $n-1$ by $b_{l-1}$, etc.  It remains to show that the result is a hook composition tableau.  Note that the condition that $b_i=b_{i+1}$ implies $i+k \in D(T)$ means that, when computing the HCT $S$, each row will contain distinct primed entries, increasing from left to right.  Similarly, the condition that if $a_i=a_{i+1}$ implies $i \notin D(T)$ guarantees that column entries are distinct.  Note that the rows increase weakly by construction.  The triple conditions are also satisfied by this reverse map.  In $T$, given the triple configuration in Figure~\ref{fig:triple}, if $T(a)>T(b)$, then $T(a)>T(c)$ since each entry of $T$ is distinct.  Then, for the corresponding triple in $S$, if $S(a) \in \mathcal{A}$, we know that $S(a)\geq S(b)$ and also $S(a) \geq S(c)$ by the reverse standardization map.  Since the reverse standardization map preserves the requirement that column entries are distinct among unprimed entries, $S(a) \neq S(c)$, and the first triple condition is satisfied.  Similarly, if $S(a) \in \mathcal{A}'$, we know that $S(a) \geq S(b)$ and $S(a)\geq S(c)$ by the reverse standardization map.  Thus the second triple condition is satisfied. Thus $x_{a_1}\cdots x_{a_k}y_{b_1}\cdots y_{b_l}$ is a monomial in $\mathcal{HQ}_\alpha(X;Y)$.
\end{proof}

\section{An insertion algorithm for hook composition tableaux}\label{section:insertion}

We give an analogue of the composition tableau insertion algorithm \cite{Mas08} for hook composition tableaux.  Note that this algorithm also gives insertion algorithms for column- and row-strict composition tableaux if restricted to just one alphabet.    

Given a hook composition tableau $F$ and $x \in \mathcal{A}\cup \mathcal{A}'$, we insert $x$ into $F$, denoted $F\leftarrow x$, in the following way:
\begin{enumerate}
\item Read down each column of $\hat{F}$, starting from the rightmost column and moving left. This is the {\em reading order} for $\hat{F}$.
\begin{enumerate}
\item If $x\in \mathcal{A}$, replace with $x$ the first entry $\hat{F}(i,j)$ such that $\hat{F}(i,j)>x$, $\hat{F}(i,j-1)\leq x$, and $j\neq 1$.  We say that ``$x$ bumps $\hat{F}(i,j)$''.  If there is no such entry, then insert $x$ into a new row of length 1 in the first column, in between the unique pair $\hat{F}(i,1)$ and $\hat{F}(i+1,1)$ such that $\hat{F}(i,1)<x<\hat{F}(i+1,1)$.  If $x<\hat{F}(1,1)$, insert $x$ in a new cell at the bottom of the leftmost column. 
\item If $x\in \mathcal{A}'$, bump the first entry $\hat{F}(i,j)$ such that $\hat{F}(i,j)\geq x$, $\hat{F}(i,j-1)<x$, and $j \neq 1$.  If there is no such entry, then insert $x$ into a new row of length 1 in the first column, in between the unique pair $\hat{F}(i,1)$ and $\hat{F}(i+1,1)$ such that $\hat{F}(i,1)<x\leq \hat{F}(i+1,1)$.  If $x\leq \hat{F}(i,1)$ for all $i$, insert $x$ in new cell at the bottom of the first column.  

\end{enumerate}

\item If $\hat{F}(i,j)=\infty$ or a new row was created, the insertion terminates.  Otherwise, set $x=\hat{F}(i,j)$ and continue to scan cell entries in reading order, starting at the cell immediately following $(i,j)$ in reading order.  

\item Continue likewise until the insertion terminates.
\end{enumerate}

In Figure \ref{fig:inserthook} we show the insertion algorithm for several valus of $x\in \mathcal{A}\cup\mathcal{A}'$.

\begin{figure}[h]
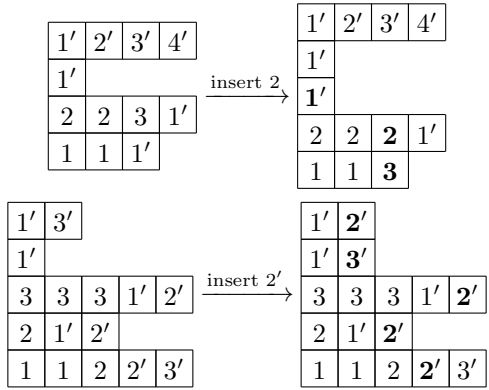

\[\tableau{1'&2'&3'&4'\\1'\\2&2&3&1'\\1&1&1'} \xrightarrow{\text{insert }2} \tableau{1'&2'&3'&4'\\1'\\\mathbf{1'}\\2&2&\mathbf{2}&1'\\1&1&\mathbf{3}}\]\[ \tableau{1'&3'\\1'\\3&3&3&1'&2'\\2&1'&2'\\1&1&2&2'&3'} \xrightarrow{\text{insert }2'} \tableau{1'&\mathbf{2'}\\1'&\mathbf{3'}\\3&3&3&1'&\mathbf{2'}\\2&1'&\mathbf{2'}\\1&1&2&\mathbf{2'}&3'}\]

\caption{Row-insertion into a hook composition tableau with bumping paths in bold.}\label{fig:inserthook}
\end{figure}

We show that the insertion algorithm yields a hook composition tableau.  We also prove that the insertion algorithms for semistandard hook tableaux and hook composition tableaux commute with the function $f$ from Theorem~\ref{TheMap}.

\begin{lemma}\label{lem:equal} Let $F$ be a hook composition tableau and let $c_1=(i_1,j_1)$ and $c_2=(i_2,j_2)$ be cells in $F$ such that $(i_1,j_1)$ appears before $(i_2,j_2)$ in reading order, $F(c_1)=F(c_2)=a$, and no cell between $c_1$ and $c_2$ in reading order has label $a$.  In $F\leftarrow k$, let $(\overline{i_1}',\overline{j_1})$ and $(\overline{i_2},\overline{j_2})$ be the cells containing the entries $F(c_1)$ and $F(c_2)$ (respectively) after the insertion.  Then $(\overline{i_1},\overline{j_1})$ appears before $(\overline{i_2},\overline{j_2})$ in reading order and if $a \in \mathcal{A}$ then $\overline{j_1} > \overline{j_2}$.
\end{lemma}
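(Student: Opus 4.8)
The plan is to work directly with the \emph{bumping route} of $F \leftarrow k$: the sequence $p_1 \prec p_2 \prec \cdots \prec p_t$ of cells that are modified, listed in the order the scan visits them, where I write $c \prec c'$ to mean $c$ precedes $c'$ in reading order and $\overline{c_1}, \overline{c_2}$ for the cells $(\overline{i_1},\overline{j_1}),(\overline{i_2},\overline{j_2})$. Because the scan never backtracks, these cells strictly increase in $\prec$, and the entry displaced from $p_s$ is carried forward until it either bumps $p_{s+1}$ or terminates the insertion by opening a new cell (at the end of an existing row, or in a new length-one row of the first column). Hence the only entries that move are those on the route, an entry sitting at $p_s$ is relocated to $p_{s+1}$, and opening a first-column row shifts the row indices of everything above it but alters no column index, so it never changes the relative $\prec$-order of two cells in distinct columns. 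I would also record the monotonicity of the carried value: an unprimed bump raises it strictly, a primed bump raises it weakly. The lemma then reduces to locating $\overline{c_1}$ and $\overline{c_2}$ according to which of $c_1, c_2$ lie on the route.

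For $a \in \mathcal{A}$, monotonicity shows at most one of $c_1, c_2$ is a bump site: once the carried value equals $a$, the next (unprimed) bump pushes it strictly above $a$, so no later $a$ can be bumped. Since unprimed columns are strict, $c_1$ and $c_2$ lie in distinct columns with $j_1 > j_2$. If neither is bumped the claims are immediate. If only $c_2$ is bumped, its displaced $a$ is carried leftward and, being unprimed, cannot bump any cell weakly below it in column $j_2$ (those entries are $< a$), so it lands strictly left of column $j_2$; thus $\overline{j_1} = j_1 > j_2 > \overline{j_2}$ and $\overline{c_1} \prec \overline{c_2}$. The delicate case is when only $c_1$ is bumped: $c_1$ being bumpable forces its left neighbor to be $< a$, so $c_2$ is not the cell immediately left of $c_1$; scanning leftward with value $a$, the cell immediately right of $c_2$ has left neighbor $c_2 = a \le a$ and value $\ge a$ by weak row increase, while value exactly $a$ there is ruled out by the hypothesis that no $a$ lies strictly between $c_1$ and $c_2$ together with column-strictness. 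Therefore the carried $a$ bumps no later than column $j_2 + 1$, landing strictly right of column $j_2$, which gives $\overline{j_1} > j_2 = \overline{j_2}$ and $\overline{c_1} \prec \overline{c_2}$.

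For $a \in \mathcal{A}'$ the carried value increases only weakly at primed bumps, so a displaced $a$ may bump a second cell of value $a$ and both $c_1, c_2$ may lie on the route. When they do, $c_1 = p_m$ and $c_2 = p_{m'}$ with $m < m'$, whence $\overline{c_1} = p_{m+1} \preceq p_{m'} \prec p_{m'+1} = \overline{c_2}$, so reading order is preserved; since primed columns may repeat, no column inequality is claimed, and Figure~\ref{fig:inserthook} illustrates exactly this chaining of equal primed entries. The remaining primed cases, in which only one of $c_1, c_2$ is bumped, are where the argument is most delicate.

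I expect the main obstacle to be precisely the terminal step in which the last carried entry of value $a$ opens a new cell, most acutely a new length-one row in the first column adjacent to an already-present equal entry. There the literal first-column placement rule can deposit the new entry on the opposite side of its equal partner from where reading-order preservation would demand, so the matching that assigns $F(c_1)$ and $F(c_2)$ to the two resulting equal cells must be set up to respect $\prec$. Controlling this step, using the no-equal-entry-between hypothesis and the first-column ordering conventions and checking it against the triple configuration of Figure~\ref{fig:triple}, is the crux; once it is settled, the surviving cases reduce to routine bookkeeping.
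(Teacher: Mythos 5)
Your overall framework---the bumping route, monotonicity of the carried value, the observation that at most one of $c_1,c_2$ can be bumped when $a\in\mathcal{A}$, and column-distinctness giving $j_1>j_2$---is essentially the paper's, and your unprimed ``neither bumped'' and primed ``both bumped'' cases are correct. But there are genuine gaps at exactly the two places where the paper's proof does its real work. In your ``delicate'' unprimed case (only $c_1$ bumped), the assertion that the carried $a$ ``bumps no later than column $j_2+1$'' silently assumes that the cell $(i_2,j_2+1)$ immediately right of $c_2$ occurs \emph{after} $c_1$ in reading order, so that the displaced $a$ actually scans it. That assumption fails when $j_1=j_2+1$ and $i_2>i_1$: the cell $(i_2,j_1)$ then sits \emph{above} $c_1$ in $c_1$'s own column, is never scanned, and nothing in your argument stops the displaced $a$ from landing in column $j_2$ or further left, which would ruin both conclusions. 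The paper closes precisely this hole with the triple rule: if $j_1=j_2+1$ and $i_2>i_1$, then rule (a) applied with $\hat{F}(i_1,j_1)=a\ge \hat{F}(i_2,j_2)=a$ forces $\hat{F}(i_2,j_1)<a$, contradicting the weak increase of row $i_2$ (whose entry in column $j_1$ is $\ge a$, or $\infty$ if absent); hence $i_2\le i_1$ whenever $j_1=j_2+1$. This exclusion is indispensable and absent from your proof. Worse, for $a\in\mathcal{A}'$ you never prove the case in which exactly one of $c_1,c_2$ is bumped: you call it ``the crux'' and describe the obstacle (terminal placement of a new first-column row next to an equal primed entry, and the need to match equal entries compatibly with reading order), but you give no argument, so the primed half of the lemma is simply not established. (The paper's treatment is one line: a displaced primed $a$, scanning under rule (1)(b), must bump at a cell weakly before $c_2$, because $F(c_2)=a\ge a$ and the entry left of $c_2$ is $<a$ by row-strictness of primed entries. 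You are right that this glosses over the possibility $j_2=1$, where $c_2$ cannot be bumped and the new-row placement can land below $c_2$; identifying that subtlety is a genuine observation, but it is not a resolution.)

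Separately, your unprimed ``only $c_2$ bumped'' case contains a false claim: it is not true that entries below $c_2$ in column $j_2$ are all $<a$, nor that the displaced $a$ lands strictly left of column $j_2$. The HCT with rows $(1,5)$ (bottom) and $(2,3,3)$ (top) is valid; here $c_1=(2,3)$, $c_2=(2,2)$, $a=3$, and inserting $2$ bumps the $3$ from cell $(2,2)$, which then bumps the $5$ directly below it, in cell $(1,2)$ of the \emph{same} column. The conclusion of the case survives for a simpler reason you could have used instead: the displaced entry lands strictly later in reading order than $c_2$, hence in a column weakly left of $j_2$, so $\overline{j_2}\le j_2<j_1=\overline{j_1}$ and reading order is preserved. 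As written, though, the reasoning is wrong, and together with the two gaps above the proposal does not constitute a proof of the lemma.
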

 

\begin{proof}
Suppose $F$ is a hook composition tableau, $c_1=(i_1,j_1)$ and $c_2=(i_2,j_2)$ with $c_1$ appearing before $c_2$ in reading order, $F(c_1)=F(c_2)=a$, and no cell between $c_1$ and $c_2$ in reading order has label $a$.  Consider $F\leftarrow k$.  Note that if $k>a$, neither $F(c_1)$ nor $F(c_2)$ will be bumped since the sequence of bumped entries is weakly increasing.  So, suppose $k\leq a$. 

If $a\in \mathcal{A}'$, and $F(c_1)$ is bumped, then either $F(c_1)$ will bump some entry from a cell $d$ between $c_1$ and $c_2$ in reading order or $F(c_1)$ bumps $F(c_2)$ by insertion rule (1)(b).  Thus $(\overline{i_1},\overline{j_1})$ occurs prior to $(\overline{i_2},\overline{j_2})$ in reading order.

If $a \in \mathcal{A}$ it follows that $j_1>j_2$ since all unprimed entries in a column must be distinct.  We must look at several cases.

First, consider $j_1-j_2>1$.  Then, if $F(c_1)$ does not bump an entry prior to $F(i_2,j_2+1)$, $F(c_1)$ will be inserted in cell $(i_2,j_2+1)$ by insertion rule (1)(a).  So, in this case, $(\overline{i_1},\overline{j_1})$ occurs prior to $(\overline{i_2},\overline{j_2})$.  

If $j_2+1=j_1$, then note that $i_2\leq i_1$ since, if $i_2>i_1$, then by the triple rule, $F(i_2,j_1)<F(c_1)$, but then row $i_2$ would not be increasing.  If $i_2=i_1$, then $F(c_1)$ cannot be bumped, though it is possible that $F(c_2)$ could be.  If $i_2<i_1$ and $F(c_1)$ is bumped, then $F(c_1)$ will bump $F(i_2,j_2+1)$ or an entry earlier in reading order.  In either case, $(\overline{i_1},\overline{j_1})$ will occur earlier in the reading order than $(\overline{i_2},\overline{j_2})$ and $\overline{j_1}>\overline{j_2}$.
\end{proof}

\begin{lemma} \label{insertHCT} The result of the insertion algorithm is a hook composition tableau.\end{lemma}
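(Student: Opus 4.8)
The plan is to verify directly that the output $F'=F\leftarrow x$ satisfies each of the four defining conditions of a hook composition tableau from Definition~\ref{def:HCT}, treating the insertion as a chain of \emph{bumps}: the inserted letter bumps an entry, that entry is reinserted and bumps another, and so on, until the chain terminates either by landing in a cell currently holding $\infty$ (extending a row) or by creating a new length-one row in the first column. Two structural facts drive everything. First, reading order scans down columns from right to left, and after a bump at $(i,j)$ the scan resumes at the next cell in reading order; hence the bumping path moves weakly to the left and, within a column, downward, so that once a column has been processed it is never revisited. Second, the sequence of bumped values is weakly increasing, as already noted in the proof of Lemma~\ref{lem:equal}.

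For the row conditions (1) and (2) I would argue as follows. Whenever a value $v$ is written into a cell $(i,j)$ formerly holding $w$, insertion rule (1)(a) or (1)(b) forces $\hat F(i,j-1)\le v$ when $v\in\mathcal A$ and $\hat F(i,j-1)<v$ when $v\in\mathcal A'$, which is exactly the required inequality with the left neighbor; and since $v<w$ (unprimed) or $v\le w$ (primed), while $w$ did not exceed the entry to its right, the leftward motion of the path (columns to the right are already settled) together with the weak monotonicity of bumped values keeps the relation with the right neighbor intact. Condition (3) is immediate: both bump rules require $j\neq 1$, so bumps never disturb the first column, and the first column changes only at the terminal step, where the placement rule inserts the new letter precisely so as to preserve the strict-unprimed, weak-primed order from bottom to top.

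The substance of the proof is the triple rule (4), and this is the step I expect to be the main obstacle. I would argue by contradiction: suppose $F'$ contains a violating triple with cells $A=(i,n+1)$, $B=(j,n)$, $C=(j,n+1)$ and values $a,b,c$ as in Figure~\ref{fig:triple}, with $i<j$. Because $F$ already satisfies the triple rule, the insertion must have altered at least one of $A,B,C$, so at least one of these cells lies on the bumping path. I would then split into cases according to which cells changed and their positions along the path, using the two structural facts above: since bumped values increase weakly and the path moves weakly leftward, a bumping value can never pass a cell into which it was eligible to descend, and this eligibility is governed by exactly the inequalities appearing in triple rules (a) and (b). The delicate case, and the reason Lemma~\ref{lem:equal} is established first, is when the violating configuration forces two \emph{equal} entries into a triple, for instance when the newly written value coincides with $b$ or with $c$. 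There Lemma~\ref{lem:equal} pins down the relative reading-order positions of equal entries after insertion, and in particular the column inequality $\overline{j_1}>\overline{j_2}$ for equal unprimed entries rules out precisely the arrangements that would yield $a\le c$ in an (a)-triple or $a<c$ in a (b)-triple. Assembling these cases contradicts the existence of a violating triple, so $F'$ is a hook composition tableau.
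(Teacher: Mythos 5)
Your outline follows the same general strategy as the paper: dispose of conditions (1)--(3) directly, then attack the triple rule by a case analysis on which cells of a potentially violating triple were altered, using the weak monotonicity of the bumped values, the reading-order motion of the bumping path, and Lemma~\ref{lem:equal} to handle equalities. However, there is a genuine gap at exactly the point you yourself flag as the main obstacle. Your governing principle --- ``a bumping value can never pass a cell into which it was eligible to descend'' --- constrains only the cells that a value actually scans, namely those lying between the cell $c_j$ from which $x_j$ was bumped and the cell $c_{j+1}$ into which it lands. The hard case is when $x_j$ lands in position $b$ of a triple, i.e.\ in cell $(j,n)$, while positions $a=(i,n+1)$ and $c=(j,n+1)$ lie \emph{earlier} in reading order than $c_j$: then $x_j$ never scanned $a$ or $c$, its eligibility there was never tested, and your principle says nothing. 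Lemma~\ref{lem:equal} does not rescue this case either; in the paper's argument it is invoked only to rule out $x_j=F(a)$, whereas the real difficulty is the strict configuration $F(\overline{a})<x_j<F(a)$ with $x_j$ bumped from a cell between $a$ and $b$ in reading order. The same loophole occurs in the termination cases (a new cell at the end of a row, or a new row of length one), where the terminal value may likewise have been bumped from a cell it never had the chance to compare against.

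The paper closes this case with an argument your proposal lacks: using the triple rule of $F$ (available inductively before the bump), it shows that such an $x_j$ must have been bumped from the \emph{same column} as $b$, that the same then holds for $x_{j-1},x_{j-2},\dots$, and that every value in the chain satisfies $F(\overline{a})<x_i<F(a)$; tracing back to $x_0$, the freshly inserted letter, which does scan every cell from the rightmost column onward, one concludes that $x_0$ would have bumped $F(a)$ --- a contradiction. This backward induction along the entire bumping chain is the crux of the proof, and without it (or some substitute) your case analysis cannot be completed. A secondary, fillable gap: your verification of the row conditions asserts the relation with the right neighbor is ``kept intact,'' but the right neighbor may itself have been rewritten earlier in the same insertion; one must check, using monotonicity of the bumped values together with the row conditions of $F$, that the bumping path never touches two cells of the same row (the paper sidesteps this by declaring (1)--(3) satisfied ``by construction,'' so you are in good company, but the claim does require an argument).
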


\begin{proof}  Suppose the sequence of labels bumped during the insertion $F\leftarrow x$ is $x_0=x, x_1,\ldots, x_m$ with each $x_i$ bumped from cell $c_i$ for $1\leq i \leq m$.  We prove by induction that the result of each bump is a hook composition tableau.  Note that conditions (1), (2), and (3) are satisfied by construction.  Therefore it suffices to show that all triples continue to satisfy the triple rule throughout the insertion process. 

If the insertion algorithm results in a new row being created we need only consider triples of the form 
\[\tableau{x_m&\infty\\\\&a},\] which means that we must show $a<x_m$ if $a\in \mathcal{A}$ and $a\leq x_m$ if $a\in \mathcal{A}'$.  Suppose that $a\in \mathcal{A}$ and $a\geq x_m$.  Then it must be that the label $x_m$ was bumped from some cell between $c_m$ and the cell containing $a$ in reading order, else $x_m$ would have bumped $a$ during the insertion process since $a\geq x_m$ and $x_m$ must be greater than the entry in the cell left of $a$, which we label $\overline{a}$.  But, in this case, prior to $x_m$ being bumped from cell $c_m$, either there was a triple of the form \[\tableau{\overline{a}&a\\\\&x_m},\] which violates the condition that $F$ is a HCT, or cell $c_m$ is in the first column of $F$, which implies $x_m\in\mathcal{A}'$ and hence $x_m>a$.  The case with $a\in \mathcal{A}'$ is similar.

Any triple of cells not involving the bumped entry will be unaffected by the insertion, so it is sufficient to consider only those triples involving the bumped entry.  Assuming that $x_0, x_1, \ldots, x_{j-1}$ have been placed by the insertion algorithm, consider what occurs when $x_j$ bumps $x_{j+1}$ from cell $c_{j+1}$.  We suppose that $x_j , x_{j+1}\in \mathcal{A}$, noting that the cases where $x_j \in \mathcal{A}$ with $x_{j+1} \in \mathcal{A}'$ and  $x_j,x_{j+1} \in \mathcal{A}'$ are similar.  When $x_j$ is inserted, bumping $x_{j+1}$ from cell $c_{j+1}$, we consider three possible locations of the cell $c_{j+1}$ in the triple $$\tableau{b&c\\\\&a}.$$  First we consider when $c_{j+1}$ is in position $a$.  Note that it is impossible for $F(b)\leq x_j<F(c)$ since if this were the case, $x_j$ would have been bumped from a cell above $c$ by the triple rule and hence would end up bumping $F(c)$ instead of $x_{j+1}$.  Thus, the triple condition is satisfied.

Next we consider when $x_j$ bumps $x_{j+1}$ from position $b$ in the triple.  We must show that if $x_j \le F(a)$, then $F(a) > F(c)$, so assume $x_j \le F(a)$.  Note that by Lemma \ref{lem:equal}, $x_j$ cannot be equal to $F(a)$ so in fact we may assume that $x_j < F(a)$.  For any cell $d$, let $\overline{d}$ indicate the cell directly to the left of $d$.  Given cells arranged as in the diagram 
\[ \tableau{\overline{b}& b & c\\\\&\overline{a}&a},\] recall that $x_j$ bumps the entry $x_{j+1}$ from position $b$.  Since the triple condition was satisfied prior to inserting $x_j$ either $x_{j+1} \le F(a)$ and $F(a) > F(c)$ or $x_{j+1} > F(a)$.  If  $x_{j+1} \le F(a)$ and $F(a) > F(c)$, then we are done, so assume $x_{j+1} > F(a)$.  Then  $x_{j+1}>F(\overline{a})$, since $F(a) \ge F(\overline{a})$.  Therefore, by the triple condition, $F(\overline{b})>F(\overline{a})$.  Then, since $x_j \ge F(\overline{b})$ and $F(\overline{b}) > F(\overline{a})$, we have $x_j>F(\overline{a})$.  Since $F(\overline{a})<x_j < F(a)$, if $x_j$ was bumped from a cell earlier in the reading order than $a$, then $x_j$ would bump $F(a)$ rather than $x_{j+1}$.  Thus $x_j$ must be bumped from a cell between $a$ and $b$ in the reading order.    Since $F(\overline{a})<x_j<F(a)$, $x_j$ could not have been bumped from a cell in the same column and below $a$ or else the triple condition would not have been satisfied.  Thus, $x_j$ must have been bumped from the same column as $b$.  However, in this case, since $x_j>F(\overline{a})$, we know that $F(\overline{c_j})>F(\overline{a})$ since the triple condition was satisfied prior to $x_j$ being bumped from cell $c_j$, so $x_{j-1}\geq F(\overline{c_j})>F(\overline{a})$, thus the triple condition is satisfied after $x_j$ is bumped from cell $c_j$.  Similarly, $x_i>F(\overline{a})$ for all $0\leq i\leq j$ and each of $x_1,x_2,\ldots, x_j$ is bumped from the same column.  But, since $x_0>F(\overline{a})$ and $x_0<F(a)$, $x_0$ must have bumped $F(a)$ instead of $x_1$.  Therefore $x_j<F(c)<F(a)$ and the triple condition is satisfied. 

Finally, if $x_j$ bumps $x_{j+1}$ from position $c$ in the triple, then $F(b)\leq x_j<x_{j+1}$.  Thus, since the triple condition was satisfied in $F$, we know that either $F(a)<F(b)$ or $x_{j+1}<F(a)$.  In either case, the triple condition is satisfied after the insertion of $x_j$.  
\end{proof}

\begin{definition}[Remmel's insertion algorithm \cite{Rem84}]\label{def:reminsert}  To insert $x$ into a semistandard hook tableau $T$, denoted $T\Leftarrow x$, do the following:
\begin{enumerate}
\item Insert $x$ into the first row by bumping the entry $y=T(i,j)$ with the property that, if $x \in \mathcal{A}$ (resp. $x \in \mathcal{A}'$), $T(i,j-1)\leq x <T(i,j)$ (resp. $T(i,j-1)<x\leq T(i,j)$).  If $x$ is greater than every entry in the row, append $x$ in a new cell at the end of the row and the algorithm terminates.
\item Insert the entry $y$ into the second row, following the same process as before.  Continue until the algorithm terminates.

\end{enumerate}
\end{definition}

\begin{lemma}{\label{lem:commute}}
The insertion procedure  $T \Leftarrow x$ commutes with the bijection $f$ from semistandard hook tableaux to hook composition tableaux used in the proof of Theorem \ref{thm:hookSchur}.  That is, given a semistandard hook tableau $T$ and $x\in \mathcal{A}\cup \mathcal{A}'$, $f(T\Leftarrow x) = f(T)\leftarrow x$ where $T\Leftarrow x$ is computed using the row-insertion algorithm in Definition~\ref{def:reminsert} to insert an entry into a semistandard hook tableau.
\end{lemma}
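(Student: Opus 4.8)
The strategy is to pass to column contents. The bijection $f$ of Theorem~\ref{thm:hookSchur} places the entries occupying the $j$th column of $T$ into the $j$th column of $f(T)$ (choosing for each the highest cell whose left neighbor is small enough), so $f$ preserves the multiset of entries in every column. Conversely, $f^{-1}$ reconstructs a tableau from its columns by merely sorting each column (unprimed strictly increasing below primed weakly increasing), so it depends only on the column multisets; since $f$ and $f^{-1}$ are mutually inverse, an HCT is uniquely determined by the multisets of entries in its columns. Both $T\Leftarrow x$ and $f(T)\leftarrow x$ are legitimate objects (the latter an HCT by Lemma~\ref{insertHCT}), so to prove $f(T\Leftarrow x)=f(T)\leftarrow x$ it is enough to show these two tableaux have the same entries in each column. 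Because $f$ preserves columns, $f(T\Leftarrow x)$ has the same columns as $T\Leftarrow x$, and $f(T)$ has the same columns as $T$; hence the identity reduces to the assertion that Remmel's insertion and the hook-composition insertion perform the same operation on column multisets.

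To prove this, I would describe the common operation as a cascade moving right to left across columns. In Remmel's insertion the bumping proceeds row by row, but its effect on column multisets telescopes: within any maximal run of consecutive bumps lying in a single column, the intermediate bumped values cancel, so the column simply gains the value that entered the run and loses one value, namely the entry bumped off the top of the run, which is then passed into a strictly smaller column; the final value of the whole cascade is placed in a new cell and passed no further. Thus there are columns $c_0 > c_1 > \dots > c_m$ and values $x_0 = x, x_1, \dots, x_m$ for which column $c_i$ loses $x_{i+1}$ and gains $x_i$ for each $i<m$, while column $c_m$ gains $x_m$ in a new cell.

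For the hook-composition insertion $f(T)\leftarrow x$, the reading order already traverses columns from right to left, so the active value bumps in the rightmost column in which the weak/strict bumping conditions can be met. The arrangement produced by $f$ is exactly what forces this reading-order bump to eject the same entry that telescoping ejects from column $c_i$ in $T$, after which the ejected value continues leftward and the process repeats, terminating with a new cell in column $c_m$. Here I would invoke Lemma~\ref{lem:equal} to control the positions of repeated entries and to confirm that the bumping conditions single out the same column $c_i$ and the same ejected value at each stage, running the unprimed ($\le$ on the left, strict on the bumped cell) and primed (strict on the left, $\le$ on the bumped cell) conventions in parallel. Since both insertions then replace $x_{i+1}$ by $x_i$ in column $c_i$ and append $x_m$ to column $c_m$, their column multisets agree, and by the reduction above $f(T\Leftarrow x)=f(T)\leftarrow x$.

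I expect the main obstacle to be precisely this matching: Remmel's algorithm sweeps upward through rows while the hook-composition algorithm sweeps right to left through the reading order, and I must show they eject the same value from each affected column despite the difference in traversal. The delicate points are the telescoping of several row-bumps inside one column, the treatment of the terminal step (which may create a new cell inside the current column rather than moving further left), and the need to keep the weak and strict conventions for $\mathcal{A}$ and $\mathcal{A}'$ synchronized throughout; Lemma~\ref{lem:equal} is the bookkeeping tool that makes the positions of equal entries precise enough for this comparison.
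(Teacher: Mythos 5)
Your reduction is sound, and it is the same first move the paper makes: since $f$ preserves column contents and $f^{-1}$ rebuilds a tableau from its column multisets alone, an HCT is determined by its columns, so it suffices to show that $T\Leftarrow x$ and $f(T)\leftarrow x$ have the same entries in every column. Your telescoped picture of Remmel's insertion (columns $c_0>c_1>\cdots>c_m$, with column $c_i$ gaining $x_i$ and passing $x_{i+1}$ leftward, and a new cell created in $c_m$) is also accurate. But the proposal stops exactly where the content of the lemma begins: the assertion that the hook-composition insertion ejects the \emph{same} value from each column $c_i$, lands nowhere in the columns skipped between $c_i$ and $c_{i+1}$, and creates its new cell in the same column $c_m$ is precisely what must be proved, and you state it (``the arrangement produced by $f$ is exactly what forces this reading-order bump to eject the same entry'') and then defer it as ``the main obstacle'' rather than proving it.

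Moreover, the tool you nominate for closing the gap cannot do so. Lemma~\ref{lem:equal} controls the relative reading-order positions of two equal entries under a single HCT insertion; it says nothing that ties Remmel's row-by-row bumping in $T$ to the reading-order scan in $f(T)$, and the paper's own proof of Lemma~\ref{lem:commute} never invokes it. What the paper actually uses is a counting argument rooted in the definition of $f$ (each column's entries are placed smallest-first into the highest admissible cell). For instance, when the cascade in $T$ stays in a single column $i$, for each bumped entry $x_k$ there are at least $k+1$ entries of column $i-1$ weakly below $x_k$ while only $k-1$ entries of column $i$ were placed before it, so $x_k$ always has a legal landing place in column $i$ of $f(T)$; and when an entry $y$ jumps from column $i$ to column $j<i$ in $T$, the fact that every column strictly between $j$ and $i$ contains exactly $p$ entries weakly below $y$ (where $y$ occupied row $p$) shows $y$ can neither bump nor settle there in $f(T)$, while a similar count shows the largest entry bumped from column $i$ of $f(T)$ must be $y$ itself. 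An argument of this kind --- exploiting how $f$ positions entries, with the weak/strict conventions for $\mathcal{A}$ and $\mathcal{A}'$ tracked through the inequalities --- is the indispensable core; without it, what you have is a correct plan whose central step is missing.
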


\begin{proof}

Consider an arbitrary semistandard hook tableau $T$ and $x \in \mathcal{A} \cup \mathcal{A'}$.  We wish to prove that $f(T\Leftarrow x) = f(T)\leftarrow x$.  Since $f$ preserves column entries, we need only show that the columns of $f(T) \leftarrow x$ contain the same entries as the columns of $T \Leftarrow x$.  The column entries of $T$ are the same as the column entries of $f(T)$ by the argument given in the proof of Theorem~\ref{thm:hookSchur}.  Suppose the insertion sequence for $T \Leftarrow x$ is $x \le x_1 \le x_2 \le \cdots \le x_m$.  (Note that if no bumps occur then $x=x_m$ and $m=0$ giving an insertion sequence consisting of just $x$.)  Two situations are possible; either the insertion terminates in the same column which contained the first bumped entry or at some point during the insertion of $x$ into $T$ an entry $y$ is bumped from column $i$ to some column $j$ such that $j<i$.

\vspace*{.1in}

\noindent {\bf Case 1.}  The insertion terminates in the same column which contained the first bumped entry.   (See Figure~\ref{fig:onecol} for an illustration of the situation.)

\begin{figure}
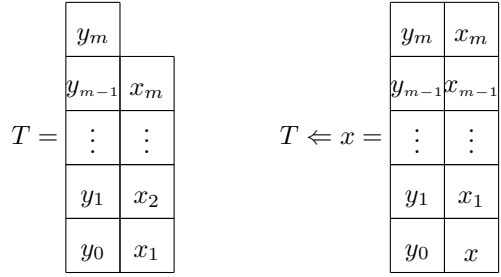

$$T ={\cellsize=4.75ex \tableau{ y_m \\ y_{\scriptscriptstyle m-1} & x_m \\ \vdots & \vdots \\ y_1 & x_2 \\ y_0 & x_1}} \qquad \qquad T \Leftarrow x = {\cellsize=4.75ex\tableau{y_m & x_m \\ y_{\scriptscriptstyle m-1} & x_{\scriptscriptstyle m-1} \\ \vdots & \vdots \\ y_1 & x_1 \\ y_0 & x}}$$
\caption{Only columns $i$ and $i-1$ are shown, where column $i-1$ might contain more entries above $y_m$.}
\label{fig:onecol}
\end{figure}

Let column $i$ be the column in which the insertion terminates.  First assume that all elements of the insertion sequence are in $\mathcal{A}$.  Then in column $i-1$ of $T$ there are at least $m+1$ entries $y_0, y_1, \hdots, y_m$ such that $x_k \ge y_k \ge y_{k-1} \ge \cdots \ge y_0$ for each $k$, and $x \ge y_0$.  Note that $x$ is less than all entries in column $i+1$ of $T$ (and also $f(T)$) since otherwise $x$ would bump an entry before reaching column $i$.  We also have $x$ is less than all entries in column $i$ of $T$ (and $f(T)$) since $x$ bumps the smallest entry in column $i$.  Since $x \ge y_0$, There is at least one entry in column $i-1$ of $T$ (and therefore $f(T)$) which is less than or equal to $x$.  Therefore $x$ will bump something in column $i$ of $f(T)$ or be placed into column $i$ to terminate the procedure.  (Note that it doesn't necessarily bump $x_1$; we just know that it bumps some entry.)

We need to show that when $x_k$ in column $i$ of $f(T)$ is bumped there exists a location in column $i$ of $f(T)$ such that either $x_k$ can be placed in a new cell at this location or $x_k$ bumps the entry $x_j$ contained in this cell in $f(T)$.

Suppose row $p$ is the highest row of $f(T)$ in which the entry in cell $(i-1,p)$ is less than or equal to $x$.  If cell $(i,p)$ is empty, then $x$ is placed into this cell and the process terminates as desired.  Otherwise, $x$ will bump the entry $x_k$ in cell $(i,p)$ and the insertion process continues.

Recall that when $f(T)$ is constructed from $T$, each entry $x_k$ is placed in the highest available location in column $i$ starting with $x_1$ and working from smallest to largest.  Since for each $k$ there are $k+1$ entries in column $i-1$ which are less than or equal to $x_k$ and $k-1$ entries from column $i$ of $T$ have been placed before placing $x_k$, there must always be at least one entry in column $i-1$ which is less than or equal to $x_k$ appearing in a lower row that $x_k$ in $f(T)$.  Therefore when any $x_k$ is bumped during insertion, there is a valid location for $x_k$ to be placed in column $i$ of $f(T)$.

If at some point during the insertion procedure $T \Leftarrow x$ an entry $x_j \in \mathcal{A}'$ is bumped, then in column $i-1$ of $T$ there are still at least $m+1$ entries $y_0, y_1, \hdots, y_m$ in column $i-1$ of $T$ such that $x_k \ge y_k \ge y_{k-1} \ge \cdots \ge y_0$ for each $k$ but there are also $m-j+1$ entries in column $i-1$ of $T$ such that $x_{\ell} > y_{\ell} > y_{k\ell1} > \cdots > y_j$ for all $\ell \ge j$.  The remainder of the  argument for the $\mathcal{A}$ case goes through identically.  Similarly, if all elements of the insertion sequence are in $\mathcal{A}'$, the statement becomes the following.  In column $i-1$ of $T$ there are still at least $m+1$ entries $y_0, y_1, \hdots, y_m$ in column $i-1$ of $T$ such that $x_k > y_k > y_{k-1} > \cdots > y_0$ for each $k$ and the proof procedes as above.

\vspace*{.1in}

\noindent {\bf Case 2.}  At some point during the insertion of $x$ into $T$ an entry $y$ is bumped from column $i$ to some column $j$ such that $j<i$.  (See Figure~\ref{fig:skipcols} for an illustration of the situation.)

\begin{figure}
$$T = \tableau{ z & b & b & {} \\ {} &  s & s & y \\ {} & s  & s & {} } \qquad \qquad T \Leftarrow x = \tableau{y & b & b & {} \\ {} & s & s & x_k \\ {} & s & s & {}}$$
\caption{The column containing $z$ in $T$ is column $j$, the column containing $y$ in $T$ is column $i$, the entries $s$ must be smaller than $y$, and the entries $b$ must be bigger than $y$. Other cells in these columns are omitted, as are other columns.}
\label{fig:skipcols}
\end{figure}

Assume that $y \in \mathcal{A}$.  (We will omit the argument for $y \in \mathcal{A'}$ since the proof is essentially the same; see the proof of case 1 for an indication of how the inequalities change to modify the proof in this situation.)  We first establish that during the bumping process on $f(T)$ if $y$ is bumped from column $i$ of $f(T)$ it cannot bump something else in column $i$ of $f(T)$ and must instead scan the next column.  To see this, suppose $y$ is in the cell $(i,p)$ of $T$.  Then there are exactly $p$ entries in column $i-1$ of $T$ that are less than or equal to $y$.

The mapping $f$ from $T$ to $f(T)$ sent $y$ to the highest position in column $i$ such that the entry to its left was less than or equal to $y$ that did not already contain an entry less than $y$.  Since there are exactly $p$ entries in column $i-1$ of $f(T)$ which are less than or equal to $y$, and $y$ is the $p^{th}$ largest entry in column $i$ of $f(T)$, once $y$ is inserted every entry less than or equal to $y$ in column $i-1$ of $T$ will have an entry less than or equal to $y$ immediately to its right.  This means in particular that any entry in column $i-1$ in a row below $y$ that contains an entry less than or equal to $y$ must have an entry to its right that is strictly less than $y$.  Therefore if $y$ is bumped during the insertion procedure $f(T) \leftarrow x$ then it cannot be placed or inserted into column $i$.

Next we show there is no place between columns $i$ and $j$ for $y$ to either bump another entry or be placed in a new cell.  To see this, first note that there are exactly $p$ entries less than or equal to $y$ in $T$ in each column between column $j$ and column $i$.  (If there were more entries less than or equal to $y$ in any of these columns, say column $k$, $y$ would have been placed in column $k+1$ instead of in column $j$.)

Thus, when $y$ is bumped from column $i$ during $f(T) \leftarrow x$, there will be no location for $y$ to bump another entry or be placed in a new cell until column $j$, at which point there is guaranteed to be a valid location for $y$ to be placed or bump another entry.

It remains to show that if $y$ is the entry bumped from column $i$ in $T$ then $y$ is the entry bumped from column $i$ in $f(T)$.   Let $x_0$ be the entry that is first bumped into column $i$ of $T$ during $x \Leftarrow T$.  We may inductively assume that $x_0$ is the entry that is first bumped into column $i$ of $f(T)$ during $x \leftarrow f(T)$.  Let $x_0 \le x_1 \le x_2 \le \cdots \le x_k \le y$ be the portion of the insertion sequence of $x \Leftarrow T$ contained in column $i$ of $T$.

If $y$ is not bumped during $x \leftarrow f(T)$, then $x_{\ell}$ for some $\ell$ such that $1 \le \ell \le k$ is the largest entry in column $i$ of $f(T)$ that is bumped.  (Note that no entry larger than $y$ can be bumped since the entries larger than $y$ are all located immediately to the right of entries larger than $y$.)  Column $i-1$ must contain at least $\ell+1$ entries less than or equal to $\ell$, but since $x_{\ell}$ is the $\ell^{th}$ largest entry in column $i$ only $\ell-1$ entries were placed in column $i$ of $f(T)$ during the map from $T$ to $f(T)$, so there must at least one row below the row containing $x_{\ell}$ that contains an entry less than or equal to $x_{\ell}$ in column $i-1$.  Therefore $x_{\ell}$ must bump the entry to the right of this entry less than or equal to $x_{\ell}$, and therefore $y$ is indeed bumped from column $i$ during the insertion procedure $x \leftarrow f(T)$.   \qedhere

\end{proof}

\section{An analogue of the Robinson-Schensted-Knuth Algorithm}{\label{section:rsk}}

The Robinson-Schensted-Knuth (RSK) Algorithm is a bijection between matrices with non-negative integer entries and pairs of semi-standard Young tableaux.  This correspondence utilizes an intermediary step which sends the matrix to a two-line array (a bi-word) satisfying certain properties and can be used to obtain information about the bottom line of this array, such as the length of the longest increasing subsequence in this word.  When restricted to permutation matrices, the Robinson-Schensted-Knuth algorithm provides an elegant proof that the number of pairs of standard Young tableaux with $n$ cells is equal to $n!$. 

Berele and Remmel~\cite{BerRem85} extend this algorithm to a bijection between members of a certain class of matrices and pairs of semistandard hook tableaux.  They use this to prove several important identities for Hook Schur functions including the following analogue of the Cauchy identity:

\begin{align}
\sum_{\lambda} HS_{\lambda}( X;S)& HS_{\lambda}(Y;T)\nonumber\\
& = \prod_{i,j} (\frac{1}{1-x_i y_j}) \prod_{i,j}(\frac{1}{1 - s_i t_j}) \prod_{i,j} (1+x_i t_j) \prod_{i,j} ( 1 + y_i s_j).{\label{cauchy}}
\end{align}

We use the insertion procedure described in Section~\ref{section:insertion} to introduce an analogous bijection between members of a certain class of matrices and pairs of hook composition tableaux.  This algorithm can be used to prove a generating function identity for quasisymmetric hook Schur functions.


The following definition describes the class of matrices used by Berele and Remmel~\cite{BerRem85}, which is also the class that we use in our variation of the Robinson-Schensted-Knuth algorithm.  Intuitively, the blocks in this class of matrices can be thought of as the portions of the diagrams which allow repeated entries (signified by nonnegative integers greater than one) and those portions which do not allow repeated entries.

\begin{definition}
Let $M$ be a $(k_2 + l_2) \times (k_1 + l_1)$ matrix.  Then $M \in \mathcal{M} (k_1, l_1, k_2, l_2)$ if and only if $M$ satisfies the conditions given in the following diagram:

\begin{center}
\begin{picture}(150,100)
\put(0,0){\line(0,1){100}}
\put(0,50){\line(1,0){150}}
\put(75,0){\line(0,1){100}}
\put(150,0){\line(0,1){100}}
\put(20,85){$k_2 \times k_1$}
\put(10,70){nonnegative}
\put(15, 60){integers}
\put(95,30){$l_2 \times l_1$}
\put(85,15){nonnegative}
\put(95, 5){integers}
\put(20,30){$l_2 \times k_1$}
\put(20, 10){$0$ or $1$}
\put(95,85){$k_2 \times l_1$}
\put(95, 65){$0$ or $1$}
\end{picture}
\end{center}
\end{definition}

Berele and Remmel give a bijection from this set of matrices to pairs of semistandard hook tableaux to prove the following theorem. 

\begin{theorem}{\label{BRrsk}}{\cite{BerRem85}}
There exists a weight preserving bijection between matrices in the collection $\mathcal{M} (k_1, l_1, k_2, l_2)$ and pairs of semistandard hook tableaux of the same shape.
\end{theorem}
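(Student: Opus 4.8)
The plan is to follow the classical RSK strategy, adapting it to the two-alphabet hook setting exactly as one does for ordinary semistandard tableaux, but using Remmel's row-insertion (Definition~\ref{def:reminsert}) in place of Schensted insertion. First I would encode a matrix $M \in \mathcal{M}(k_1,l_1,k_2,l_2)$ as a biword (two-line array) whose columns $\binom{r}{c}$ record, with multiplicity $M_{rc}$, each pairing of a row-index letter $r \in \{1,\dots,k_2\}\cup\{1',\dots,l_2'\}$ with a column-index letter $c \in \{1,\dots,k_1\}\cup\{1',\dots,l_1'\}$. The two off-diagonal blocks of $M$ are restricted to $0/1$ entries, which is precisely the condition that a pair $\binom{r}{c}$ mixing the two alphabets may appear at most once; this matches the fact that primed letters increase strictly along rows, and unprimed letters strictly up columns, in a hook tableau. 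I would then sort the columns of the biword into a fixed linear order --- lexicographic in the top entry, ties broken by the bottom entry, using the total order $1<\cdots<k<1'<\cdots<l'$ on each alphabet --- chosen so that weak versus strict repetitions are compatible with the hook conditions.

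Next I would run the insertion: starting from the empty tableau, insert the bottom-row letters $v_1, v_2, \dots$ one at a time into the insertion tableau $P$ via $P \Leftarrow v_s$, recording in the recording tableau $Q$ the top-row letter $u_s$ in whatever new cell the $s$-th insertion creates. This produces a pair $(P,Q)$ of the same shape. That $P$ is a semistandard hook tableau is immediate from the properties of Remmel's insertion, since each insertion sends a semistandard hook tableau to one of the same kind. The content of the construction is showing that $Q$ is also a semistandard hook tableau: because the biword columns are fed in sorted order, successive recording letters are weakly increasing, and the $0/1$ restriction on the mixed blocks guarantees that equal recording letters drawn from different alphabets never occupy the same row or column, so the strict-increase conditions for $Q$ hold. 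Weight preservation is then automatic: each biword column contributes one letter to $P$ (its bottom entry) and one to $Q$ (its top entry), so the monomial weight of $M$ factors as $\operatorname{wt}(P)\operatorname{wt}(Q)$ across the two alphabets.

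Finally I would establish bijectivity by exhibiting the inverse. Given $(P,Q)$ of the same shape, I would locate the cell of $Q$ holding the last-placed recording letter (the maximal one in the sorted order), remove the corresponding cell of $P$, and run Remmel's insertion backward (reverse bumping) to recover the inserted letter and hence one biword column; iterating peels off the columns in reverse and reassembles $M$. The main obstacle I anticipate lies in this reverse-bumping step together with the verification that $Q$ is a bona fide hook tableau: the primed/unprimed asymmetry --- strict versus weak monotonicity along rows versus columns, with the roles of the two alphabets interchanged between $P$ and $Q$ --- forces the tie-breaking order on the biword to be chosen with care, and one must check that reverse insertion is well-defined and single-valued, which is exactly where the $0/1$ constraints are used. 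Once the sorting convention is pinned down so that forward insertion and reverse bumping are mutually inverse, injectivity and surjectivity follow simultaneously.
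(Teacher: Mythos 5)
Your overall architecture---encode the matrix as a biword, apply Remmel's insertion (Definition~\ref{def:reminsert}) to the bottom letters, record the top letters in the new cells, and invert by reverse bumping---is exactly the Berele--Remmel strategy that the paper cites for this theorem (and the same skeleton the paper uses for its own Theorem~\ref{rsk}). However, there is a genuine gap at the one point you flagged and then left unresolved: the sorting convention on the biword. Plain lexicographic order (top entries weakly increasing, ties broken by weakly \emph{increasing} bottom entries) is wrong, and with it the construction fails. Concretely, take the matrix with $M_{1',1}=M_{1',2}=1$ (a $0/1$ block entry pattern), giving biword columns $\binom{1'}{1}\binom{1'}{2}$. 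Inserting $1$ and then $2$ via Definition~\ref{def:reminsert} appends two cells side by side in the first row, so the recording tableau $Q$ has two $1'$s in the same row, violating the strict row-increase required of primed letters in a semistandard hook tableau. The convention actually needed---stated explicitly in the paper's proof of Theorem~\ref{rsk}, following Berele and Remmel---is asymmetric: under an equal \emph{unprimed} top letter the bottom letters are arranged weakly increasing, but under an equal \emph{primed} top letter they are arranged weakly \emph{decreasing}. In the example, inserting $2$ first and then $1$ makes $1$ bump the $2$ into a new row, so the two recorded $1'$s land in distinct rows of the same column, forming the vertical strip that the hook conditions on $Q$ demand.

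This is not a technicality that can be deferred: the reversed order under primed tops is precisely what converts the horizontal strip of new cells (produced by inserting a weakly increasing word) into a vertical strip, matching the strict-rows/weak-columns condition on primed letters in $Q$. Relatedly, your claim that the $0/1$ restriction on the mixed blocks ``guarantees that equal recording letters drawn from different alphabets never occupy the same row or column'' is not what does the work; the $0/1$ blocks only prevent repeated biletters, while the relative placement of cells recorded with equal letters is controlled entirely by the biword ordering together with bumping-path lemmas for Remmel's insertion (successive insertions under the correct order create new cells strictly right of, or strictly above, the previous one, as appropriate). Once the asymmetric convention is fixed, the rest of your outline---weight preservation and invertibility via reverse bumping---matches the cited proof.
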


The Theorem below provides a bijection from this same set of matrices to a different collection of tableau diagrams; namely to pairs of hook composition tableaux instead of pairs of semistandard hook tableaux.  The process is similar to that of Berele and Remmel; the hook composition tableaux diagrams appear because we use the insertion process described in Section 4 rather than the insertion given by Berele and Remmel.

\begin{theorem}{\label{rsk}}
There exists a weight preserving bijection between matrices in the collection $\mathcal{M} (k_1, l_1, k_2, l_2)$ and pairs of hook composition tableaux with the same underlying partition.
\end{theorem}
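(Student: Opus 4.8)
The plan is to construct the bijection by composing the already-established Berele--Remmel bijection (Theorem~\ref{BRrsk}) with the map $f$ from Theorem~\ref{thm:hookSchur}. Specifically, given a matrix $M \in \mathcal{M}(k_1, l_1, k_2, l_2)$, I would first apply the Berele--Remmel bijection to obtain a pair $(P, Q)$ of semistandard hook tableaux of the same shape $\lambda$, and then apply $f$ to each coordinate to produce the pair $(f(P), f(Q))$ of hook composition tableaux. Since Theorem~\ref{thm:hookSchur} establishes that $f$ is a shape-rearranging bijection from semistandard hook tableaux of shape $\lambda$ to hook composition tableaux whose shape rearranges to $\lambda$, the two outputs $f(P)$ and $f(Q)$ will each have an underlying partition equal to $\lambda$, giving exactly the desired pair with the same underlying partition. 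Weight preservation is immediate because $f$ preserves the multiset of entries in each tableau (it only rearranges columns), so $(f(P), f(Q))$ carries the same weight as $(P,Q)$, which in turn matches the weight of $M$.

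First I would verify that $f \times f$ is a genuine bijection onto its target. The map $f$ is a bijection from $SSHT(\lambda)$ onto $\{F \in HCT(\alpha) : \boldsymbol{\lambda}(\alpha) = \lambda\}$ by Theorem~\ref{thm:hookSchur}, with explicit inverse $f^{-1}$ described there. Running $f$ independently on each of $P$ and $Q$ therefore yields a bijection between pairs of semistandard hook tableaux of common shape $\lambda$ and pairs of hook composition tableaux whose common underlying partition is $\lambda$; the inverse simply applies $f^{-1}$ to each component and then inverts the Berele--Remmel correspondence. Composing bijections yields a bijection, so the overall map is well defined and invertible.

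The main subtlety, and the step I expect to require the most care, is confirming that $f(P)$ and $f(Q)$ have the \emph{same} underlying composition shape, not merely the same underlying partition --- or, alternatively, recognizing that the theorem statement only demands the same underlying partition and not the same composition. Since $P$ and $Q$ share a common shape $\lambda$ but their column content differs, $f(P)$ and $f(Q)$ generally land on \emph{different} compositions $\alpha_P$ and $\alpha_Q$ that both rearrange to $\lambda$; this is acceptable under the stated conclusion but differs from the classical RSK situation where $P$ and $Q$ literally share a shape. I would emphasize that the correct analogue of ``same shape'' in the composition world is ``same underlying partition,'' and this is precisely what the composition of $f$ with Berele--Remmel delivers.

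Finally, I would note that Lemma~\ref{lem:commute}, which shows that the hook composition insertion $\leftarrow$ commutes with $f$ via the Berele--Remmel insertion $\Leftarrow$, provides an alternative and more intrinsic route: one can run the insertion algorithm of Section~\ref{section:insertion} directly on the two-line array associated to $M$, building the insertion tableau $P$ by successive insertions $F \leftarrow x$ and recording the recording tableau $Q$ by the usual positional bookkeeping. Lemma~\ref{lem:commute} guarantees that this direct construction agrees with the ``insert-then-apply-$f$'' description, which both certifies that the outputs are valid hook composition tableaux (by Lemma~\ref{insertHCT}) and makes invertibility transparent via reverse insertion. I would present the composition argument as the primary proof and remark that the commutation lemma justifies the direct insertion-based formulation.
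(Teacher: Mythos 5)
Your proposal is correct, but your primary argument takes a genuinely different route from the paper's. The paper makes the direct insertion construction primary: it forms the Berele--Remmel biword $M(\omega)$, builds the insertion tableau $P = \emptyset \leftarrow b_1\cdots b_n$ using the algorithm of Section~\ref{section:insertion} (so $P$ is an HCT by Lemma~\ref{insertHCT}), builds $Q$ by positional recording, and then invokes Lemma~\ref{lem:commute} to argue that the column sets of $Q$ agree with those of the Berele--Remmel recording tableau, which certifies that $Q$ is a valid HCT; it also spells out the inverse explicitly, via reverse insertion starting from the highest occurrence of the largest primed entry of $Q$. Your composition argument ($f\times f$ applied after Theorem~\ref{BRrsk}) proves the stated existence claim more quickly, and it is sound: $f$ restricts, for each fixed $\lambda$, to a weight-preserving bijection from $SSHT(\lambda)$ onto the HCTs whose shape rearranges to $\lambda$, so pairs of common shape go bijectively to pairs of common underlying partition, and bijectivity and weight preservation are inherited rather than re-proved. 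Moreover, by Lemma~\ref{lem:commute} your composed map and the paper's direct map are in fact the \emph{same} bijection --- the paper's verification of $Q$ is essentially a proof of this agreement --- so the difference is one of logical structure, not of output. What the paper's route buys is the intrinsic RSK machinery (insertion, recording, and the reverse/eviction procedure) stated in a form that is reused in the Littlewood--Richardson argument of Section~\ref{sec:LR}; what your route buys is brevity and a cleaner certification of invertibility. Your closing paragraph correctly identifies the direct construction as an alternative; you have simply swapped which argument is the proof and which is the remark, relative to the paper.
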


\begin{proof}
Let $M$ be a matrix in $\mathcal{M} (k_1, l_1, k_2, l_2)$.  We form a word $M(\omega)$ comprised of biletters ${ i \choose j}$ following the method of Berele and Remmel~\cite{BerRem85}.  Label the columns of $M$ by $1,2, \hdots , k_1, 1', 2', \hdots, l'_1$ from left to right and label the rows of $M$ by $1,2, \hdots, k_2, 1', 2', \hdots , l'_2$.  Each entry $M_{i,j}$ of $M$ is associated to a biletter ${ p \choose q}$ where $p$ is the label of the $i^{th}$ row of $M$ and $Q$ is the label of the $j^{th}$ column of $M$.  The biletters within the word $M(\omega)$ are arranged so that the top elements are weakly increasing (with primed elements considered greater than unprimed elements) and the biletters with equal top elements are arranged according to the following.  (See Figure~\ref{fig:RSK} for an example.)

\begin{enumerate}
\item The bottom entries appearing under an equal top unprimed entry are weakly increasing.
\item The bottom entries appearing under an equal top primed entry are weakly decreasing.
\end{enumerate}

We form a pair of hook composition tableaux with the same underlying partition from $M(\omega) = {a_1 \choose b_1} \cdots {a_n \choose b_n}$ by letting $P= \emptyset \leftarrow b_1\cdots b_n$ and recording the growth of $P$ n the hook composition tableau $Q$.  That is, place $a_j$ into the highest available cell in the $k^{th}$ column of $Q$ where $k$ is the index of the column in which a new cell was created upon inserting $b_j$ into $P$.  The fact that $P$ is a hook composition tableau is immediate from Lemma~\ref{insertHCT}.  We must check that $Q$ is a hook composition tableau.

To see that $Q$ is indeed a hook composition tableau, recall Berele and Remmel's insertion procedure from Definition~\ref{def:reminsert} used to prove Theorem~\ref{BRrsk}.  This insertion procedure commutes with the insertion procedure used to construct the hook composition tableau $P$ by Lemma~\ref{lem:commute}, so the column sets in the recording hook composition tableau $Q$ are the same as the column sets of the recording tableau in the Berele-Remmel construction.  Since there is a unique hook composition tableau with each valid collection of column sets, and we know this collection is valid since it can also be obtained from the Berele-Remmel recording tableau.  The recording diagram is this hook composition tableau by construction since the entries were placed into the columns in increasing order just as they are in the map from semistandard hook tableaux to hook composition tableaux.

To reverse the process, begin with the highest occurrence of the largest primed entry $d$ of $Q$.  Let $L$ be the length of the row containing this entry.  Find the smallest entry $e$ in $P$ such that $e$ appears as the last entry of a row of length $L$.  This value $e$ is the value in $P$ corresponding to $d$.  Reverse the insertion steps to determine the sequence of entries which bumped $e$ and find the initial entry that was inserted into a hook composition tableau $P'$ to obtain $P$.  Place this entry under $d$ in the biword and repeat, building the biword from right to left.
\end{proof}

\begin{figure}
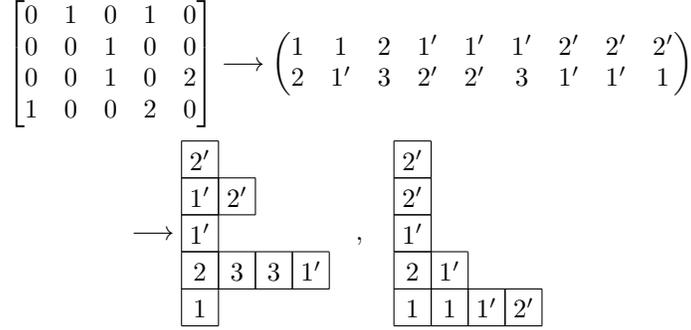

$$\begin{bmatrix}
0 & 1 & 0 & 1 & 0 \\ 0 & 0 & 1 & 0 & 0 \\ 0 & 0 & 1 & 0 & 2 \\ 1 & 0 & 0 & 2 & 0
\end{bmatrix}  \longrightarrow \begin{pmatrix} 1 & 1 & 2 & 1' & 1' & 1' & 2' & 2' & 2' \\ 2 & 1' & 3 & 2' & 2' & 3 & 1' & 1' & 1  \end{pmatrix}$$$$ \longrightarrow \tableau{2' \\ 1' & 2' \\ 1' \\ 2 & 3 & 3 & 1' \\ 1} \quad , \quad \tableau{2' \\ 2' \\ 1' \\ 2 & 1' \\ 1 &  1 & 1' & 2'} \quad $$
\caption{Example of the RSK analogue for $M \in \mathcal{M}(k_1,l_1,k_2,l_2)$}
\label{fig:RSK}
\end{figure}

The following analogue of the Cauchy identity is a corollary to Theorem~\ref{rsk} but we prove it using Equation~\ref{cauchy} and Theorem~\ref{thm:hookSchur}.

\begin{cor}
\begin{align*}
\sum_{\lambda} \sum_{\boldsymbol{\lambda}({\alpha})=\boldsymbol{\lambda}({\beta})=\lambda}& \left( HQ_{\alpha}( X;S) HQ_{\beta}(Y;T) \right)\\ &= \prod_{i,j} (\frac{1}{1-x_i y_j}) \prod_{i,j}(\frac{1}{1 - s_i t_j}) \prod_{i,j} (1+x_i t_j) \prod_{i,j} ( 1 + y_i s_j),
\end{align*} where $\alpha$ and $\beta$ are compositions.
\end{cor}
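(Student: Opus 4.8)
The plan is to bypass the bijection of Theorem~\ref{rsk} entirely and instead derive the identity as a formal consequence of the classical hook Cauchy identity together with the decomposition established in Theorem~\ref{thm:hookSchur}. The essential observation is that the product on the right-hand side of the claimed identity is \emph{verbatim} the product appearing in Equation~\ref{cauchy}; hence it suffices to show that the double sum on the left equals $\sum_{\lambda} HS_{\lambda}(X;S)\,HS_{\lambda}(Y;T)$.

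First I would rewrite each hook Schur factor using Theorem~\ref{thm:hookSchur}, which gives $HS_{\lambda}(X;S) = \sum_{\boldsymbol{\lambda}(\alpha)=\lambda} HQ_{\alpha}(X;S)$ and, applied in the variable pair $(Y;T)$, $HS_{\lambda}(Y;T) = \sum_{\boldsymbol{\lambda}(\beta)=\lambda} HQ_{\beta}(Y;T)$. Substituting both expansions into the product inside the sum over $\lambda$, I would expand the product of the two finite sums. Since the first sum ranges over compositions $\alpha$ rearranging to $\lambda$ and the second over compositions $\beta$ rearranging to $\lambda$, the product distributes into a single sum over all ordered pairs $(\alpha,\beta)$ with $\boldsymbol{\lambda}(\alpha)=\boldsymbol{\lambda}(\beta)=\lambda$, which is precisely the inner sum appearing on the left-hand side of the corollary.

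Summing this identity over all partitions $\lambda$ then yields
\[
\sum_{\lambda} HS_{\lambda}(X;S)\,HS_{\lambda}(Y;T) = \sum_{\lambda}\ \sum_{\boldsymbol{\lambda}(\alpha)=\boldsymbol{\lambda}(\beta)=\lambda} HQ_{\alpha}(X;S)\,HQ_{\beta}(Y;T),
\]
and the left-hand side equals the target product by Equation~\ref{cauchy}, completing the argument.

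There is no serious obstacle here: the entire content is the factorization of $HS_{\lambda}$ supplied by Theorem~\ref{thm:hookSchur} and the bookkeeping of the index sets. The only point requiring (minor) care is confirming that the product expressions in the corollary and in Equation~\ref{cauchy} agree factor for factor, and that the distributive expansion of the product of two sums reproduces exactly the pairing $\boldsymbol{\lambda}(\alpha)=\boldsymbol{\lambda}(\beta)=\lambda$ rather than an unconstrained pair of independently summed indices. One could alternatively give a purely bijective proof via Theorem~\ref{rsk}, by showing that the weight generating function over the matrices in $\mathcal{M}(k_1,l_1,k_2,l_2)$ factors directly into the stated product; I would mention this as the conceptual reason the identity holds, but the substitution route above is shorter and self-contained.
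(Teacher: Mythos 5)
Your proposal is correct and follows essentially the same route as the paper: the paper's proof also substitutes the decomposition $HS_{\lambda} = \sum_{\boldsymbol{\lambda}(\alpha)=\lambda} \mathcal{HQ}_{\alpha}$ from Theorem~\ref{thm:hookSchur} into Equation~\ref{cauchy} and combines the resulting summations, exactly as you describe. Your closing remark about the alternative bijective route via Theorem~\ref{rsk} likewise mirrors the paper, which notes the identity is a corollary of that theorem but opts for the shorter substitution argument.
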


\begin{proof}
Recall Equation~\ref{cauchy}: \begin{align*}\sum_{\lambda} HS_{\lambda}( X;S)& HS_{\lambda}(Y;T) \\&= \prod_{i,j} (\frac{1}{1-x_i y_j}) \prod_{i,j}(\frac{1}{1 - s_i t_j}) \prod_{i,j} (1+x_i t_j) \prod_{i,j} ( 1 + y_i s_j),\end{align*} which provides a generating function for products of hook Schur functions.  Theorem~\ref{thm:hookSchur} states that: $$HS_{\lambda} (X;Y) = \sum_{\boldsymbol{\lambda}({\alpha})=\lambda} HQ_{\alpha} (X; Y);$$ substitute this refinement of the hook Schurs into the generating function identity to obtain \begin{align*}\sum_{\lambda} \sum_{\boldsymbol{\lambda}({\alpha})=\lambda} HQ_{\alpha} &(X;Y) \sum_{\boldsymbol{\lambda}({\beta})=\lambda} HQ_{\beta} (X;Y)\\& = \prod_{i,j} (\frac{1}{1-x_i y_j}) \prod_{i,j}(\frac{1}{1 - s_i t_j}) \prod_{i,j} (1+x_i t_j) \prod_{i,j} ( 1 + y_i s_j),\end{align*} which reduces to \begin{align*}\sum_{\lambda} \sum_{\boldsymbol{\lambda}({\alpha})=\boldsymbol{\lambda}({\beta})=\lambda}& \left( HQ_{\alpha}( X;S) HQ_{\beta}(Y;T) \right)\\&= \prod_{i,j} (\frac{1}{1-x_i y_j}) \prod_{i,j}(\frac{1}{1 - s_i t_j}) \prod_{i,j} (1+x_i t_j) \prod_{i,j} ( 1 + y_i s_j)\end{align*} when the summations are combined.
\end{proof}

\section{A Littlewood-Richardson rule}\label{sec:LR}

The multiplication of hook Schur functions behaves exactly the same as the multiplication of Schur functions in the sense that the structure constants are the same.  

\begin{theorem}[\cite{Rem84}]
If $$s_{\nu}(X) s_{\mu}(X) = \sum_{\lambda} {g^{\lambda}_{\nu, \mu}} s_{\lambda}(X), $$ then $$HS_{\nu}(X;Y) HS_{\mu}(X; Y) = \sum_{\lambda} {g^{\lambda}_{\nu, \mu}} HS_{\lambda}(X; Y).$$
\end{theorem}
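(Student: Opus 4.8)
The plan is to mirror the classical Schensted-insertion proof of the Littlewood--Richardson rule, using Remmel's insertion (Definition~\ref{def:reminsert}), which is exactly the insertion underlying the Berele--Remmel correspondence of Theorem~\ref{BRrsk}. First I would expand the product combinatorially: since $HS_\nu HS_\mu$ is the generating function $\sum_{(S,T)}$ over ordered pairs $(S,T)$ of semistandard hook tableaux of shapes $\nu$ and $\mu$, with each pair contributing the product of its two weights, it suffices to reorganize this sum by shape. To each pair I would associate $P = S \Leftarrow w(T)$, the hook tableau obtained by inserting the reading word of $T$ into $S$, together with a skew recording tableau $Q$ of shape $\lambda/\nu$ (where $\lambda$ is the shape of $P$) that records, step by step, the cells appended during the insertion.

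Second, I would prove that $(S,T)\mapsto(P,Q)$ is a weight-preserving bijection onto pairs $(P,Q)$ in which $P$ is a semistandard hook tableau of some shape $\lambda\supseteq\nu$ and $Q$ is a Littlewood--Richardson recording tableau of shape $\lambda/\nu$ and content $\mu$. Here weight-preservation means that the weight of $P$ equals the product of the weights of $S$ and $T$, while $Q$ is an unweighted bookkeeping object. The structural input I would need is a Knuth-type (super-plactic) theorem for Remmel's insertion: words differing by the appropriate elementary relations insert to the same hook tableau $P$, which makes the reverse map well defined and forces $Q$ to satisfy the lattice (Yamanouchi) condition exactly as in the classical setting.

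Third --- and this is the heart of why the hook and ordinary structure constants agree --- the recording tableau $Q$ encodes only the sequence in which cells are created, so it never records whether an inserted letter is primed or unprimed; it is an ordinary semistandard skew tableau subject to the ordinary lattice condition. Hence, for fixed $\nu,\mu,\lambda$, the number of admissible $Q$ is precisely the classical Littlewood--Richardson number, which by hypothesis equals $g^{\lambda}_{\nu,\mu}$. A clean way to pin this down is to specialize $Y\to 0$: the background identity $HS_\lambda(X;0)=s_\lambda(X)$ shows that the bijection restricts to the classical Schensted correspondence on column-strict tableaux, and since the recording data is insensitive to the alphabet, the count of $Q$ is unaffected by the presence of primed letters. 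Summing the weight of $P$ over all $P$ of shape $\lambda$ and over the $g^{\lambda}_{\nu,\mu}$ choices of $Q$ then yields $HS_\nu HS_\mu = \sum_\lambda g^{\lambda}_{\nu,\mu}\, HS_\lambda$.

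The main obstacle I anticipate is establishing the Knuth/super-plactic invariance of Remmel's insertion with full rigor, since the two alphabets and the mixed strict/weak conventions must be tracked simultaneously; this invariance is what simultaneously guarantees that the map is a bijection and that the admissible $Q$ are governed by the classical lattice condition. If one is willing to use the symmetric-function description of hook Schur functions instead, there is a shortcut worth recording: by Berele and Regev's identification of $HS_\lambda(X;Y)$ with the super Schur function --- the image of $s_\lambda$ under the ring homomorphism determined by $p_r\mapsto p_r(X)+(-1)^{r-1}p_r(Y)$ --- the claim is immediate, since any ring homomorphism sends $s_\nu s_\mu=\sum_\lambda g^{\lambda}_{\nu,\mu}s_\lambda$ to $HS_\nu HS_\mu=\sum_\lambda g^{\lambda}_{\nu,\mu}HS_\lambda$. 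I would nonetheless present the insertion argument as the primary proof, because it is the version that refines to the quasisymmetric setting developed in the remainder of this section.
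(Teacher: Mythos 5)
You should first note that the paper contains no proof of this statement: it is quoted, with attribution, from Remmel~\cite{Rem84} as known background for the paper's actual contribution, the quasisymmetric analogue in Theorem~\ref{thm:LRrule}. So the comparison is really with Remmel's cited proof, and your primary plan is essentially that argument: expand $HS_\nu HS_\mu$ over pairs $(S,T)$, insert the reading word of $T$ into $S$ using the insertion of Definition~\ref{def:reminsert} (the same insertion underlying Theorem~\ref{BRrsk}), and observe that the recording tableau $Q$ is blind to the primed/unprimed distinction, so the admissible $Q$ are counted by the classical Littlewood--Richardson number $g^{\lambda}_{\nu,\mu}$. That last observation is exactly the crux of why the structure constants agree. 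However, as you yourself flag, the super-plactic/Knuth invariance of Remmel's insertion --- which is what makes the map bijective and forces the classical lattice condition on $Q$ --- is left unproven, and that is precisely the technical bulk of~\cite{Rem84}; your Y$\to$0 specialization argument makes the count plausible but does not by itself establish that the set of admissible recording tableaux is independent of the presence of primed letters. So the insertion argument, as written, is a faithful outline rather than a complete proof. Your shortcut, on the other hand, is complete and correct as stated: Berele and Regev identify $HS_\lambda(X;Y)$ with the supersymmetric Schur function, i.e., the image of $s_\lambda$ under the ring homomorphism determined by $p_r \mapsto p_r(X) + (-1)^{r-1}p_r(Y)$, and applying a ring homomorphism to $s_\nu s_\mu = \sum_\lambda g^{\lambda}_{\nu,\mu} s_\lambda$ yields the theorem in one line. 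If the goal were only this identity, the homomorphism proof suffices; the insertion machinery earns its keep only because, as you note, it is the part that refines to the quasisymmetric setting in the remainder of the section.
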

\begin{figure}
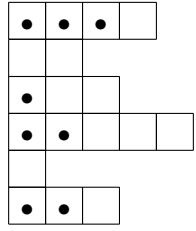

\[\tableau{{\bullet}&{\bullet}&{\bullet}&{}\\{}&{}\\{\bullet}&{}&{}\\{\bullet}&{\bullet}&{}&{}&{}\\{}\\{\bullet}&{\bullet}&{}}\]
\caption{The empty cells indicate $\beta/\alpha$ where $\beta=(3,1,5,3,2,4), \alpha=(2,2,1,3),$ and $\gamma=(2,0,2,1,0,3)$.}\label{fig:gamma}
\end{figure}
We prove an analogous result for products of quasisymmetric hook Schur functions and hook Schur functions. 
\begin{theorem}\label{thm:LRrule}
If \[\mathcal{CS}_\alpha(X) s_\mu(X)=\sum_{\beta:\alpha \subseteq \beta} A_{\alpha\mu}^\beta \mathcal{CS}_\beta(X),\]
then 
\begin{equation}\label{eq:LR}
\mathcal{HQ}_\alpha(X;Y)HS_\mu(X;Y) = \sum_{\beta: \alpha\subseteq \beta} A_{\alpha\mu}^\beta \mathcal{HQ}_\beta (X;Y).
\end{equation}
\end{theorem}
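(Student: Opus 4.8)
The plan is to deduce \eqref{eq:LR} from its column-strict shadow by pushing the hypothesis through a single \emph{superization} homomorphism $\Theta$. Let $\Theta$ be the linear endomorphism of the ring of quasisymmetric functions determined on the fundamental basis by $\Theta(F_{n,D}) = \tilde{Q}_{n,D}(X;Y)$; this is well defined since the $F_{n,D}$ form a basis. The two structural facts I would establish are that $\Theta$ carries $\mathcal{CS}_\alpha$ to $\mathcal{HQ}_\alpha$ and $s_\mu$ to $HS_\mu$, and that $\Theta$ is multiplicative. Granting these, applying $\Theta$ to the hypothesis $\mathcal{CS}_\alpha(X)s_\mu(X) = \sum_\beta A_{\alpha\mu}^\beta \mathcal{CS}_\beta(X)$ yields \eqref{eq:LR} in one line.

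For the first fact I would use Equation~\eqref{eq:colfund}, namely $\mathcal{CS}_\alpha = \sum_{T\in SYCT(\alpha)} F_{n,D(T)}$, so that $\Theta(\mathcal{CS}_\alpha) = \sum_{T\in SYCT(\alpha)}\tilde{Q}_{n,D(T)}$. On the other side, Theorem~\ref{thm:qhookfund} gives $\mathcal{HQ}_\alpha = \sum_{T\in SHCT(\alpha)}\tilde{Q}_{n,D(T)}$. Since standard hook composition tableaux are exactly standard Young composition tableaux and carry the identical descent set (as recorded in the remark following the standardization of an HCT, where $D(T)=\{i:i+1\text{ is weakly left of }i\}$ in both settings), the two sums coincide and $\mathcal{HQ}_\alpha = \Theta(\mathcal{CS}_\alpha)$. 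Summing over the compositions rearranging to $\mu$ and invoking both $s_\mu = \sum_{\boldsymbol{\lambda}(\alpha)=\mu}\mathcal{CS}_\alpha$ and Theorem~\ref{thm:hookSchur} then gives $\Theta(s_\mu) = \sum_{\boldsymbol{\lambda}(\alpha)=\mu}\mathcal{HQ}_\alpha = HS_\mu$ for free, so I do not need a separate argument for the symmetric hook Schur functions.

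The crux, and the step I expect to be the main obstacle, is showing that $\Theta$ is an algebra homomorphism. Here Theorem~\ref{lem:superfund} is exactly the needed input: it expresses $\tilde{Q}_{n,D} = \sum_i F_{i,D_1}(X)F_{n-i,D_2}(Y)$, and the descent data $D_1$ and $\overline{D_2}$, together with the concatenation-versus-almost-concatenation dichotomy governed by whether $i\in D$, are precisely the two tensor factors of the quasisymmetric coproduct $\Delta(F_{n,D})$, with the canonical involution $\omega: F_{m,E}\mapsto F_{m,\overline{E}}$ applied to the second factor. In other words $\Theta = m\circ(\mathrm{id}\otimes\omega)\circ\Delta$, where $m$ evaluates the first tensor factor in $X$, the second in $Y$, and multiplies. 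Because $\Delta$ is an algebra homomorphism, $\omega$ is the product-preserving involution of quasisymmetric functions that complements descent sets, and the target ring is commutative, the composite $\Theta$ is multiplicative; the real work is verifying this identification of $\Theta$ carefully against Theorem~\ref{lem:superfund} (I would double-check it directly on a product of two fundamentals). With multiplicativity in hand, $\mathcal{HQ}_\alpha HS_\mu = \Theta(\mathcal{CS}_\alpha)\Theta(s_\mu) = \Theta(\mathcal{CS}_\alpha s_\mu) = \sum_\beta A_{\alpha\mu}^\beta\Theta(\mathcal{CS}_\beta) = \sum_\beta A_{\alpha\mu}^\beta \mathcal{HQ}_\beta$, which is \eqref{eq:LR}. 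As a consistency check, summing this identity over all $\alpha$ with $\boldsymbol{\lambda}(\alpha)=\nu$ collapses, via Theorem~\ref{thm:hookSchur}, to the known product $HS_\nu HS_\mu = \sum_\lambda c_{\nu\mu}^\lambda HS_\lambda$, confirming that the refined coefficients $A_{\alpha\mu}^\beta$ aggregate to the classical Littlewood--Richardson numbers.
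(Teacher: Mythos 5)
Your proposal is correct, but it takes a genuinely different route from the paper. The paper's proof of Theorem~\ref{thm:LRrule} is combinatorial: it builds an explicit insertion-based bijection $g$ from $HCT(\alpha)\times SSHT(\mu)$ to pairs consisting of a Littlewood--Richardson composition tableau of shape $\beta/\alpha$ and an HCT of shape $\beta$, relying on the commutation result Lemma~\ref{lem:commute}, the technical Lemmas~\ref{lem:rowlength} and~\ref{lem:lattice}, and a lengthy verification that the eviction procedure inverts insertion. You instead treat superization $\Theta\colon F_{n,D}\mapsto \tilde{Q}_{n,D}$ as a ring homomorphism and push the column-strict hypothesis through it. Your two identifications are sound: $\Theta(\mathcal{CS}_\alpha)=\mathcal{HQ}_\alpha$ follows from Equation~\eqref{eq:colfund}, Theorem~\ref{thm:qhookfund}, and the fact (noted in the paper) that standard hook composition tableaux coincide with standard Young composition tableaux with identical descent sets, and $\Theta(s_\mu)=HS_\mu$ then follows by summing and invoking Theorem~\ref{thm:hookSchur}. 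Your factorization $\Theta=m\circ(\mathrm{id}\otimes\psi)\circ\Delta$ does match Theorem~\ref{lem:superfund}: the pairs $(D_1,\overline{D_2})$ with the concatenation versus almost-concatenation dichotomy are exactly the terms of the standard coproduct of $F_{n,D}$, with the complementation involution applied to the right tensor factor. Two caveats. First, you invoke two facts proved nowhere in this paper --- that $\Delta$ is an algebra map (the bialgebra property of QSym, i.e., the coproduct formula for fundamentals) and that the complementation involution is an algebra automorphism of QSym --- so these require citations; both are standard and appear in \cite{LMvW13}. (Also, that involution is conventionally denoted $\psi$, while $\omega$ usually denotes the transpose map $F_{n,D}\mapsto F_{n,\overline{D}}$ composed with reversal; a naming quibble only.) Second, ``double-checking on a product of two fundamentals'' is not a substitute for the structural argument; the structural argument itself, which you do give, is what carries the proof. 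The trade-off between the two approaches: yours is far shorter and exhibits Equation~\eqref{eq:LR} as a formal consequence of multiplicativity of superization, applicable verbatim to any quasisymmetric identity; the paper's bijection costs much more work but produces the combinatorial interpretation of the coefficients $A_{\alpha\mu}^\beta$ as counts of Littlewood--Richardson composition tableaux, along with an RSK-style correspondence and an eviction algorithm of independent interest.
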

In Theorem \ref{thm:LRrule} the structure constant $A_{\alpha\mu}^\beta$ is the number of {\em Littlewood-Richardson composition tableaux} of shape $\beta/\alpha$ with content $\mu$.  Given compositions $\alpha, \beta$, a skew composition tableau $\beta / \gamma$ is of shape $\beta/\alpha$ provided there $\gamma$ is a weak composition (some parts may be zero) with $\gamma \subseteq \beta$ and $\gamma^+=\alpha$ where $\gamma^+$ is $\gamma$ with all the zero parts removed, as seen in Figure~\ref{fig:gamma}.  A tableau $V$ of shape $\beta/\alpha$ is a {\em Littlewood-Richardson composition tableau} when 

\begin{enumerate}
\item The cells in $\alpha$ are filled with zeros, as is an additional column, column 0, to the left of the first column of $\beta$.  Where there are multiple 0's in the same column, we consider the 0's to be increasing from top to bottom.
\item Each row is weakly increasing from left to right.
\item A triple of cells in rows $i$ and $j$, $i<j$, is a {\em Type A triple} if $\beta_i\leq \beta_j$ with the cells arranged as shown in Figure~\ref{fig:abtriples}, and is a {\em Type B triple} if $\beta_i>\beta_j$ with the cells arranged as shown in Figure~\ref{fig:abtriples}.  All Type A and B triples are {\em inversion triples}, meaning $c\leq b <a$ or $a<c\leq b$.  \begin{figure}
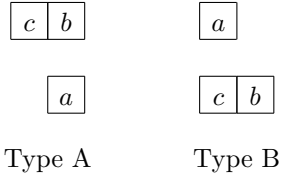

\begin{center}
\begin{tabular}{cc}
${\displaystyle\tableau{c&b\\\\&a}}$ \hspace{.3in}&\hspace{.3in} ${\displaystyle \tableau{a\\\\c&b}}$\\\\
Type A \hspace{.3in}&\hspace{.3in} Type B
\end{tabular}
\end{center}
\caption{The two types of triples between rows $i$ and $j$, $i<j$, in a tableau of shape $\beta$.  When $\beta_i\leq \beta_j$ consider Type A triples.  When $\beta_i>\beta_j$ consider Type B triples.}\label{fig:abtriples}
\end{figure}
 \item The {\em column reading word} $w_{col}(T)$ obtained by reading down each column of $V$ starting with the rightmost column (omitting all 0's) must be a {\em lattice word}, a word $w_1w_2\ldots w_n$ where, for each $i<\ell(\mu)$ and each prefix $w_1w_2\ldots w_j$ with $j\leq n$ the number of $i$'s in the prefix is weakly greater than the number of $i+1$'s in the prefix.
\end{enumerate}

To prove Theorem~\ref{thm:LRrule} we define a map $g:HCT(\alpha)\times SSHT(\mu)\rightarrow \bigcup_{\beta: \alpha\subseteq \beta} (LR(\beta/\alpha,\mu)\times HCT(\beta))$in the following way:
\begin{enumerate}
\item Given $(U,T)\in HCT(\alpha)\times SSHT(\mu)$, construct the tableau $T_\mu$ by filling the Ferrers diagram of $\mu$ with the label $i$ in each cell in row $i$.  
\item Perform $RSK^{-1}(T,T_\mu)$ to obtain a double word $\binom{\sigma}{\tau}$.
\item Then $g((U,T)) = (V,S)$ where $S$ is the hook composition tableau obtained by inserting $\tau$ into $U$ and $V$ is the of shape $\beta/\alpha$ obtained by placing 0's in the cells of $\alpha$ and placing each label $\sigma_i$ from $\sigma$ into $V$ in the location where a new cell was created by the insertion of $\tau_i$ into $U$.
\end{enumerate}

We claim that $V$ is a Littlewood-Richardson composition tableau of shape $\beta/\alpha$ and content $\mu$ and that $g$ is a bijection.  An example of $g$ can be seen in Figure~\ref{fig:bijection}.  Prior to proving this, we first prove some technical lemmas.  The first establishes that in the insertion process (step 3 above), once a row $i$ is longer than some row $j$, $i<j$, then row $i$ remains longer than row $j$ as labels are inserted.  

\begin{figure}
\[U=\tableau{1'&2'&3'&4'\\1'\\2&2&3&1'\\1&1&1'}\quad,\quad T=\tableau{1'\\1'&2'\\2&2&3&1'\\1&1&1&2}\quad,\quad
T_\mu = \tableau{4\\3&3\\2&2&2&2\\1&1&1&1}\]
\bigskip

\[ \rightarrow \left(\begin{array}{ccccccccccc}1&1&1&1&2&2&2&2&3&3&4\\2&2&1'&2'&1&1&3&1'&1&1'&2\end{array}\right)\]
\bigskip

\[\rightarrow V=\tableau{0&0&0&0\\0\\1\\3\\4\\2&2&3\\0&0&0&0&1&1\\0&0&0&1&2&2}\quad,\quad S=\tableau{1'&2'&3'&4'\\1'\\1'\\1'\\1'\\3&1'&2'\\2&2&2&2&3&1'\\1&1&1&1&1&2}
 \]
\caption{Example of $g$}\label{fig:bijection}
\end{figure}

\begin{lemma}\label{lem:rowlength}
Let $T$ be a hook composition tableau of shape $\beta$.  If $\beta_i>\beta_j$ for some $i<j$, then in $T\leftarrow k$, row $i$ is strictly longer than row $j$.  \end{lemma}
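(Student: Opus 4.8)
The plan is to exploit the fact that a single insertion $T \leftarrow k$ creates exactly one new cell: each bump either replaces a finite entry (leaving the shape fixed) or terminates the insertion by filling an $\infty$-cell, thereby extending one row by a single cell, or by starting a new length-one row in the first column. Consequently the conclusion can only fail in one way: if that single new cell extends row $j$ from length $n := \beta_j$ to length $n+1$ while $\beta_i = n+1$. In every other situation $\beta_i \ge \beta_j+1$ together with the fact that at most one of the two rows grows already forces row $i$ to remain strictly longer. I would therefore assume, toward a contradiction, that the insertion terminates by placing its final value $v$ in the cell $(j,n+1)$, with $\beta_i = n+1$ for some $i<j$.

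First I would record what the triple rule forces inside $T$. Writing $w = \hat F(i,n+1)$ for the rightmost entry of row $i$ and $b = \hat F(j,n)$ for the rightmost entry of row $j$, the entries $\hat F(i,n+1)$, $\hat F(j,n)$, $\hat F(j,n+1)=\infty$ sit in the configuration of Figure~\ref{fig:triple}; since no finite entry can exceed $\infty$, parts (a) and (b) of the triple rule yield $w \le b$, with strict inequality when $w \in \mathcal A$.

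The heart of the argument is to track the value carried by the insertion as it moves through the reading order. Because the terminal cell $(j,n+1)$ lies in column $n+1$, the insertion first scans all of column $n+2$, and in particular the $\infty$-cell $(i,n+2)$, whose left neighbor is $w$. Since the insertion does not terminate there, its current value cannot bump $(i,n+2)$, so that value is ${}<w$ if unprimed and ${}\le w$ if primed; call this condition $(\ast)$. I would then prove by induction along the reading path from $(i,n+2)$ to $(j,n+1)$ that $(\ast)$ is preserved by every subsequent bump. The cells met along this stretch are exactly those of column $n+2$ below row $i$ and those of column $n+1$ above row $j$. For a bumped entry in column $n+2$ below row $i$, one applies the triple rule to the triple it forms with $\hat F(i,n+1)=w$ and $\hat F(i,n+2)=\infty$; for a bumped entry in column $n+1$ above row $j$, one applies the rule to the triple in which $w=\hat F(i,n+1)$ occupies the lower row, using the inductive hypothesis $(\ast)$ to supply the comparison with $w$ needed to invoke the rule. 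In both cases the bumped entry again satisfies $(\ast)$. Hence the value reaching $(j,n+1)$ still satisfies $(\ast)$. But filling the $\infty$ at $(j,n+1)$ requires this value to weakly (resp. strictly, if primed) exceed its left neighbor $b$, and since $b \ge w$ this would force the value to dominate $w$, contradicting $(\ast)$. Therefore the insertion cannot extend row $j$, and row $i$ remains strictly longer.

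I expect the main obstacle to be precisely the inductive propagation of $(\ast)$: one must verify that no entry bumped strictly between $(i,n+2)$ and $(j,n+1)$ can raise the carried value above $w$, which demands a careful case analysis using both halves of the triple rule while keeping track of which alphabet each relevant entry lies in. A secondary point I would state explicitly is that $\hat F$ should be supplemented with enough $\infty$-columns that any row, including a longest one, can be extended on the right; with this padding the unique new cell lands in the row dictated by the reading order, which is exactly what the comparison with $w$ controls.
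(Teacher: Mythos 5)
Your proof is correct and takes essentially the same route as the paper's: reduce to the case $\beta_i=\beta_j+1$, use the triple rule on the cells ending rows $i$ and $j$ to get $w\le b$ (strict when $w\in\mathcal{A}$), and then show via the triple rule that the value carried by the insertion can never rise above $w$ in the critical region, so it cannot legally be placed at the end of row $j$. The paper organizes this as a case split on $k$ versus $w$ followed by a ``first crossing'' contradiction located in column $n+1$, whereas you maintain the bound $(\ast)$ as a forward invariant from the cell $(i,n+2)$ onward, but the same triples and the same bumping-condition comparisons drive both arguments.
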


\begin{proof}
Let $T$ be a hook composition tableau with row $i$ strictly longer than row $j$ for some $i<j$ and let $k \in \mathcal{A}\cup \mathcal{A}'$.  First note that if $\beta_i>\beta_j+1$, then row $i$ will still be strictly longer than row $j$ in $T\leftarrow k$.  

Thus we assume $\beta_i=\beta_j+1$.  Then there is a triple of cells in $T$ with labels $a,c,\infty$
\[\tableau{a&\infty\\\\&c}\] with $a,\infty$ in row $j$ and $c$ in row $i$.   Since this is a valid triple, it must be that $a>c$ if $c\in \mathcal{A}$ (resp. $a\geq c$ if $c\in \mathcal{A}'$).  Note that if $k \in \mathcal{A}$ (resp. $k \in \mathcal{A}'$) and $k\geq c$ (resp. $k>c$), then the new cell in $T\leftarrow k$ will be placed in a column strictly to the right of $c$.  This is guaranteed since $k$ (or any label $x\geq k$ bumped during the insertion process) must scan the column directly to the right of $c$ from top to bottom.  If $k$ (or $x$) is not placed in a new cell earlier, a new cell will be created in row $i$ directly to the right of the cell containing $c$ and thus row $i$ is still strictly longer than row $j$.  

So, we can assume $k<c$ (resp. $k\leq c$) and suppose, to obtain a contradiction, that the new cell in $T\leftarrow k$ is placed at the end of row $j$ with label $d\geq k$.  Then the triple of cells under consideration is 
\[\tableau{a&d\\\\&c}\quad.\]  
For this triple to be valid, $d>c$ (resp. $d\geq c$) and $k<c<d$ (resp. $k\leq c\leq a<d$).  If $d$ was bumped from a column to the right of the cell containing $c$ then either $d$ would have been placed in a new cell to the right of $c$ in row $i$ unless $d$ were bumped from the column directly right of the cell containing $c$ and a row below $i$.  However, this cannot happen because then there would be a triple of the form \[\tableau{c&\infty\\\\&d}\] where $c<d<\infty$ which contradicts the triple rule.  Thus, $d$ must have been bumped from the column containing $c$ and a row above row $j$.

Since $k<c$ but $d>c$ (resp. $d \ge c$), then in the same column as $c$ there must be an entry $f<c$  (resp. $f\leq c$) that bumps some entry $g$ (possibly $g=d$) with $g\geq c$ (resp. $g>c$).  Thus, prior to inserting $k$ the triples involving $f,g,c$ are arranged 
\[\tableau{\overline{f}&f\\\\\overline{g}&g\\\\ &c}.\]  For $\overline{g}, g,$ and $c$ to form a valid triple we must have $c<\overline{g}\leq g$ (resp. $c\leq \overline{g}<g$).  Then, in $T\leftarrow k$, $f$ replaces $g$, so $c<\overline{g}\leq f$ (resp.$c\leq \overline{g}<f$), but this contradicts $f<c$ (resp. $f\leq c$).  Thus, the new cell created in $T\leftarrow k$ cannot be in row $j$.
\end{proof}

The next lemma provides a convenient way to prove that the column word of a composition tableau $T$ satisfying certain conditions is a lattice word.

\begin{lemma}\label{lem:lattice}
Let $T$ be a tableau of shape $\beta/\alpha$ and content $\mu$ satisfying the type A and B triple conditions.  If the word obtained from $T$ by arranging the entries in each column, starting with the rightmost column, in weakly increasing order is a lattice word, then $w_{col}(T)$ is also a lattice word and $T$ is a Littlewood-Richardson composition tableau.
\end{lemma}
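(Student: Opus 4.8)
The plan is to verify that $T$ meets every clause in the definition of a Littlewood--Richardson composition tableau. Conditions (1)--(3) (the $0$-fillings, the weakly increasing rows, and the Type~A/Type~B inversion conditions) are either built into the hypotheses or immediate, so the whole content of the lemma is condition (4): that $w_{col}(T)$ is a lattice word. I would use the reformulation that a word is a lattice word exactly when, after reading any prefix, the vector of letter multiplicities $(\#1,\#2,\dots)$ is weakly decreasing. The two words in play---$w_{col}(T)$, in which each column is read top to bottom, and the column-sorted word $w_{sort}$, in which each column is read in weakly increasing order---process the columns in the same right-to-left order and contain the same multiset of entries in each column. Hence any prefix ending at a column boundary has the same multiplicities in both words, so the weakly decreasing condition holds for $w_{col}(T)$ automatically at every column boundary; only prefixes that end in the interior of a column can possibly fail.

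A key preliminary step is to show that the entries within any single column of $T$ are distinct. If two cells of a column $C$ shared a value $w$, say in rows $i<j$, then together with an appropriate neighbor they would form a triple violating the inversion condition of Figure~\ref{fig:abtriples}: in the Type~A case ($\beta_i\le\beta_j$) one uses the cell directly left of the upper $w$ (which is $\le w$ by the row condition, and is a $0$ when $C=1$), while in the Type~B case ($\beta_i>\beta_j$) one uses the cell directly right of the lower $w$ (which exists since $\beta_i\ge C+1$). In either case the resulting triple $(a,b,c)$ has $a=b$ or $a=c$, which is incompatible with $c\le b<a$ or $a<c\le b$. This contradiction gives column-distinctness.

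Next I would argue by contradiction through a first-violation analysis. Suppose $w_{col}(T)$ is not a lattice word, and consider the first prefix at which the weakly decreasing condition fails; it occurs upon reading some entry $v$ in a column $C$, at a moment when $\#(v-1)=\#v$ immediately beforehand. By column-distinctness this $v$ is the unique $v$ in column $C$, so $\#v$ equals the number $(R_C)_v$ of $v$'s in the columns strictly right of $C$. Writing $(R_C)_{v-1}$ for the analogous count of $(v-1)$'s, the equality $\#(v-1)=\#v$ together with $(R_C)_{v-1}\ge (R_C)_v$ (the column-boundary multiplicities are weakly decreasing, since $w_{sort}$ is lattice) forces both $(R_C)_{v-1}=(R_C)_v$ and the absence of any $(v-1)$ read before $v$ inside column $C$. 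Column $C$ must nonetheless contain a $(v-1)$, necessarily lying below the $v$: otherwise the same $v$ would be read in $w_{sort}$ at a point where $\#(v-1)=(R_C)_{v-1}=(R_C)_v=\#v$, contradicting that $w_{sort}$ is lattice. Thus the failure pins down a precise configuration: a column $C$ in which $v$ sits strictly above $v-1$, while the columns to its right contain equally many $v$'s and $(v-1)$'s.

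The main obstacle is to rule out this configuration using the triple conditions, and this is where the argument is most delicate. The idea is to examine the triple formed by the cells holding $v$ and $v-1$ in column $C$ together with their neighbors in column $C+1$: applying the relevant Type~A or Type~B inversion condition shows that the cell immediately to the right of the $v-1$ must either equal $v-1$ again or leap past $v$ entirely, and propagating this consequence across successive columns produces a $(v-1)$ to the right of $C$ that is not matched by a corresponding $v$. Formalizing this propagation (and handling the boundary cases where a row terminates, where the $\infty$-convention enters) yields strictly more $(v-1)$'s than $v$'s among the columns right of $C$, i.e.\ $(R_C)_{v-1}>(R_C)_v$, contradicting the equality forced above. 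Hence no violation can occur, $w_{col}(T)$ is a lattice word, and $T$ is a Littlewood--Richardson composition tableau.
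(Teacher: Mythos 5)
Your setup---the reduction to mid-column failures via the column-boundary observation, column-distinctness from the triple conditions, and the first-violation analysis pinning down a column $C$ containing $v$ in a cell $(y,C)$ strictly above $v-1$ in a cell $(x,C)$ with $(R_C)_{v-1}=(R_C)_v$---is correct and is essentially the paper's own setup (the paper phrases it as: some $(i+1)_k$ precedes $i_k$ in $w_{col}(T)$, and by latticeness of the sorted word and column-distinctness they must share a column). The genuine gap is exactly the step you defer as ``formalizing this propagation'': the propagation you describe does not produce the contradiction, and what replaces it is the real content of the lemma. When $\beta_x\le\beta_y$, the only triple containing both cells $(x,C)$ and $(y,C)$ is the Type A triple whose third cell is $(y,C-1)$, and it forces $T(y,C-1)=v$; the rightward constraint you want would instead come from the triple on $(y,C),(y,C+1),(x,C+1)$, and those cells need not exist (for instance $\beta_x=\beta_y=C$, both rows ending at column $C$; the Littlewood--Richardson triple conditions in this paper apply only to actual cells---the $\infty$ convention you invoke belongs to the definition of hook composition tableaux, not to this definition). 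In that situation nothing to the right of column $C$ is constrained at all, and no counting argument can succeed, since $(R_C)_{v-1}=(R_C)_v$ is internally consistent; the paper instead argues \emph{leftward}, choosing the violating index $k$ maximal: the next occurrence of $v$ is the forced entry $T(y,C-1)$, so a next occurrence of $v-1$ exists, and by maximality of $k$ plus column-distinctness it must sit in column $C-1$ above $T(y,C-1)$, whereupon an explicit Type A or Type B triple on the two occurrences of $v-1$ fails the inversion condition. Moreover, your ``leap past $v$'' branch is a dead end even when the relevant cells do exist: $T(x,C+1)>v$ creates no surplus of $(v-1)$'s and hence no contradiction with the count equality.

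In the other case, $\beta_x>\beta_y$, the Type B triple does force $T(x,C+1)=v-1$, and your count equality together with latticeness at column boundaries forces a $v$ somewhere in column $C+1$; but that $v$ may lie \emph{below} row $x$, in which case your invariant ``$v$ above $v-1$ in the same column'' is destroyed and the propagation cannot restart---yet no imbalance of counts has appeared either. The paper's resolution here is again not propagation: choosing the violating index $k$ minimal forces the new $v$ (namely $(i+1)_{k-1}$) into column $C+1$ below row $x$, and then a single triple joining the original $v$ at $(y,C)$ with this new $v$---Type B on their two rows if the lower row is longer, Type A otherwise---violates the inversion condition. Both cases are thus settled by one-step extremal arguments (maximal index going left, minimal index going right) terminating in an explicit forced non-inversion triple, rather than by an iterated propagation that manufactures extra $(v-1)$'s; these arguments are absent from your sketch, so the proof is incomplete at its crux.
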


\begin{proof}
Let $T$ be a tableau of shape $\beta/\alpha$ and content $\mu$ satisfying the type A and B triple conditions.  Let $C_k$ be the set of nonzero entries in the $k$th column of $T$ arranged in weakly increasing order.  Then $u=C_mC_{m-1}\cdots C_1$ is the word obtained by arranging the entries in each column of $T$ in weakly increasing order.  Suppose $u$ is a lattice word.  For each $i$ in $w_{col}(T)$ let $i_k$ denote the $k$th occurrence of $i$ from right to left.  Note that for $w_{col}(T)$ to be a lattice word, $i_k$ must appear earlier than $(i+1)_k$ in $w_{col}(T)$ for each $i$ and each $k$. 

Suppose $w_{col}(T)$ is not a lattice word.  Then there exist $i, i+1$ such that $(i+1)_k$ appears earlier in $w_{col}(T)$ than $i_k$ for at least one $k$.  Since $u$ is a lattice word, $i_k$ and $(i+1)_k$ must be in the same column of $T$ with $i_k$ appearing in row $x$ and $(i+1)_k$ appearing in row $y$ where $x<y$.  

Suppose first that $\beta_x\leq \beta_y$.  Then we consider the type A triple
\[\cellsize=5.5ex\tableau{a&{\scriptstyle (i+1)_k}\\\\&i_k}.\]  Without loss of generality we consider $k$ maximal among such pairs of $i_k, (i+1)_k$.  For this triple to satisfy the triple condition, we must have $a=i+1$.  Thus, since each column can have at most one occurrence of each label, $a=(i+1)_{k+1}$.  Since $k$ is maximal, $i_{k+1}$ must appear in the same column as $a$ and above $a$ as shown
\[\cellsize=5.5ex\tableau{{\scriptstyle i_{(k+1)}} & b\\\\a&{\scriptstyle (i+1)_k} \\\\c&i_k}.\]  
If the row $z$ containing $i_{k+1}$ has $\beta_z\geq \beta_y$, then $\beta_z\geq \beta_x$ and the type A triple formed by $i_{k+1}, b, i_k$ does not satisfy the triple condition.  If $\beta_z<\beta_y$ but $\beta_z\geq \beta_x$, then, as before there is a type A triple violation.  Finally, if $\beta_z<\beta_x$, then the type B triple $i_{k+1}, c, i_k$ does not satisfy the triple condition.

Next suppose $\beta_x>\beta_y$.  Then we consider the type B triple 
\[\cellsize=5.5ex\tableau{{\scriptstyle (i+1)_k}\\\\i_k & c}.\] Without loss of generality we consider $k$ minimal among such pairs of $i_k, (i+1)_k$.  For the triple to satisfy the triple conditions, we must have $c=i_{k-1}$.  Then $(i+1)_{k-1}$ must appear in the column below $i_{k-1}$ due to the lattice condition and the minimality of $k$.  Suppose $(i+1)_{k-1}$ appears in row $z$, $z<x$.  Then we have 
\[\cellsize=5.5ex\tableau{{\scriptstyle{(i+1)}_k}&a\\\\i_k&c\\\\b&{\scriptstyle i+1 }}.\]
If $\beta_z>\beta_y$ then $(i+1)_k, b, (i+1)_{k-1}$ violate the triple condition.  If $\beta_z\leq \beta_y$ then $(i+1)_k,a,(i+1)_{k-1}$ violate the type A triple condition.  

Therefore $w_{col}(T)$ must be a lattice word and hence $T$ is a Littlewood-Richardson composition tableau.
\end{proof}

We are now ready to prove the Littlewood-Richardson rule for quasisymmetric $(k,l)$-hook Schur functions.

\begin{proof}({\em of Theorem~\ref{thm:LRrule}})

Let $(U,T) \in HCT(\alpha) \times SSHT(\mu)$ and let $g((U,T)) = (V,S)$.  Note that the rows of $V$ are weakly increasing since new cells in $S$ are created at the right end of an existing row or in a new row of length one. 
We now consider the type $A$ and $B$ triples in $V$.  First, suppose there is a type $A$ triple of cells where the entries are $a,b,c$ as in Figure~\ref{fig:abtriples}.  Suppose the triple is not an inversion triple, that is, $c\leq a<b$.  Then either $c=a=0$ or $a$ would have been placed before $b$, contradicting Lemma~\ref{lem:rowlength}.  Thus all type $A$ triples in $V$ must be inversion triples.  Next, suppose there is a type $B$ triple of cells with the entries $a,b,c$ as in Figure~\ref{fig:abtriples} and suppose $c<a\leq b$, that is, the triple is not an inversion triple.  Then, since $a>0$, the cells containing $c$ and $a$ are not in column $0$ of $V$.  If $a<b$, then the cell containing $a$ must have been added to $V$ prior to the cell containing $b$ since the entries are added in increasing order, contradicting Lemma~\ref{lem:rowlength}.  If $a=b$, then $a$ would have be placed to the right of $b$, contradicting our assumption on the positions of $a$ and $b$.

By Lemma~\ref{lem:commute} the bijection $f$ and insertion commute.  Thus, the column entries of the Littlewood-Richardson tableau $L$ obtained by inserting the entries of $\tau$ into $f^{-1}(U)$ while keeping track of created cells by placing the corresponding entries of $\sigma$ into the Littlewood-Richardson tableau are the same as the entries in each column of $V$.  Note that if we let $C_i$ denote the set of entries in column $i$ of $V$ placed in weakly increasing order, the word $u=C_jC_{j-1}\cdots C_1$ (arranging column entries from the righmost column to the left) is a lattice word since it is the column reading word of $L$.   By Lemma~\ref{lem:lattice}, $w_{col}(V)$ is a lattice word and $V$ is a Littlewood-Richardson composition tableau.

Given a pair $(V,S)$ where $V\in LR(\beta/\alpha,\mu)$ and $S\in HCT(\beta)$ we compute $g^{-1}((V,S))$ using the following steps:
\begin{enumerate}
\item In $V$ let $m$ denote the largest nonzero label and remove the rightmost cell with label $m$ from $V$.  
\item In $S$, remove the corresponding cell by taking the label $a$ in the cell and scanning $S$ in reverse reading order starting from the cell being removed.  Then $a$ will \emph{evict} the first entry $b$ such that $a>b$ if $a\in \mathcal{A}$ (resp. $a\geq b$ if $a\in \mathcal{A}'$) and $a\leq c$ (resp. $a<c$) where $c$ is the cell immediately right of $b$.  Continue scanning now with $b$.  The algorithm terminates when all cells prior to the cell containing $a$ in reading order have been scanned.  We denote the HCT resulting from the removal of the cell containing $a$ by $a\leftarrow S$.
\item The final entry evicted from $S$ will be placed into the bottom row of a double word, while $m$ is the corresponding entry in the top row.   
\item Continue removing cells from $V$ one at a time, each time removing a cell containing the largest nonzero label from the rightmost cell with that label.  Remove the corresponding cells one at a time from $S$ as before and prepend the results in the double word.  Once all nonzero entries in $V$ are deleted, and the empty rows are removed, the remaining cells of $S$ form a composition tableau of shape $\alpha$ that we will call $U$.   We will show that $U$ is a HCT.
\item Take the resulting double word $\binom{\pi}{\sigma}$ and use the traditional RSK algorithm to insert $\sigma$ into an (initially) empty tableau while keeping track of cells added with the corresponding labels in $\pi$.  The result of inserting $\sigma$ will be called $T$. 
\end{enumerate}

We show that the result of each cell removal in $S$ is a HCT.  To do this, note that the only way to take a diagram satisfying the triple rule and remove an entry to create a diagram that doesn't satisfy the triple rule is to remove an entry from position $c$ such that the entry to the left of $c$ is less than or equal to (resp. less than) an entry in a lower row of the column containing $c$.  We must therefore show that if $a<b$ are in the same column, with $a$ at the end of its row and $a$ in a higher row than $b$ that the cell containing $a$ will not be removed prior to the cell containing $b$.  Suppose such a pair $a<b$ exists in $S$ with $a$ in cell $(j,k)$ and $b$ in cell $(i,k)$, $i<j$ with $\beta_i=k$ and $\beta_j=k$.  In $V$ let $e$ denote the entry in cell $(j,k)$ and $f$ denote the entry in cell $(i,k)$.  Then in order for the cell $(j,k)$ to be removed from $S$ prior to cell $(i,k)$, we have $e>f$.  Since $V$ satisfies all the triple conditions, we know $k>1$ and the entry of $(j,k-1)$ in $V$ is nonzero.  If all the entries in row $j$ are greater than all the entries in row $i$, then there is a type A triple consisting of the entries in $(j,0),(j,1), (i,1)$.  Since the entry in $(j,1)$ is greater than the entry in $(i,1)$ but the entry in $(i,1)$ is greater than the entry in $(j,0)$, there is a violation of the type A triple rules.  Suppose now that there is some point in rows $j$ and $i$ where 
\[\tableau{\cdots &g&h & \cdots & e\\\\\cdots & x&y&\cdots & f}\] with $g<x$ and $h>y$.  Then $g<y<h$ which violates the type A triple rule.  Thus removing each cell in $S$ according to the above rules results in a valid HCT.  The result of removing all cells corresponding to nonzero entries of $V$ results in a HCT of shape $\alpha$.

It remains to be shown that when Definition~\ref{def:reminsert} is used to insert the bottom row of the double word $\binom{\pi}{\sigma}$ from left to right into an empty tableau the result is a semistandard hook tableau and the corresponding entries in the top row are placed into the corresponding cells in a second semistandard tableau with the result being $T_\mu$.  This is equivalent to showing that the subwords 
\[\begin{pmatrix} i&i& \cdots & i \\ \sigma_{i_1}& \sigma_{i_2}&\cdots &\sigma_{i_{\mu_i}}\end{pmatrix}\] have the property that $\sigma_{i_j}\leq \sigma_{i_{j+1}}$ for $1\leq j < \mu_i$ and that $\sigma_{i_{\mu_{i}}}>\sigma_{(i+1)_1}$ for each $i$ if $\sigma_{i_{\mu_{i}}}\in \mathcal{A}$ (resp. $\sigma_{i_{\mu_i}}\geq \sigma_{(i+1)_1}$ if $\sigma_{i_{\mu_i}}\in \mathcal{A}'$).

Suppose the cells containing $x$ and $y$ in $S$ correspond to cells containing $i$ in $V$ with $x$ appearing first in column reading order.  Then the cell in $S$ containing $x$ will be removed prior to the cell containing $y$ in reading order.  Note that the row containing $y$ must be strictly shorter than the row containing $x$ since there is at most one $i$ per column in $V$ and further, if the row containing $y$ has length $\beta_j$ and the row containing $x$ has length $\beta_m$ with $m<j$, then $\beta_j<\beta_m-1$, otherwise the type B triple rule will be violated in $V$.  When the cell containing $x$ is removed from $S$, a value $c\leq x$ is removed from $S$ and in the new HCT $S'$, the length of the $m$th row is $\beta_m-1$.  Suppose the entry of cell $(m,\beta_m-1)$ in $S'$ is $d$.  Then $d\leq x$.  If $y\geq x$, then $y$ will evict $d$ when the cell containing $y$ is removed since $x$ has already been removed and there is therefore nothing to the right of $y$.  The process  continues and eventually the entry scanning in reverse reading order continuing the removal process is smaller than $c$ or, if $e$ was the label that evicted $c$ from $S$, eventually the scanning entry $g$ will scan the row containing $e$ and will evict the entry $h$ left of $e$.  Thus $h\leq c$ and the entry removed from $S'$ will be weakly less than $c$.

To prove  $\sigma_{i_{\mu_{i}}}>\sigma_{i+1_1}$ for each $i$ if $\sigma_{i_{\mu_{i}}}\in \mathcal{A}$ (resp. $\sigma_{i_{\mu_i}}\geq \sigma_{i+1_1}$ if $\sigma_{i_{\mu_i}}\in \mathcal{A}'$), we first note that the rightmost $i$ must appear before the leftmost $i+1$ in column reading order due to the requirement that $w_{col}(V)$ is a lattice word.

Let $S'$ be the HCT obtained by removing all entries corresponding to entries in $V$ which are greater than $i+1$ as well as all but the leftmost $i+1$.  Let $v_i$ and $v_{i+1}$ denote the cells in $V$ containing the rightmost $i$ and the leftmost $i+1$ respectively.  Note that in $S'$ there are corresponding cells which we call $u_i$ and $u_{i+1}$ respectively.  We denote by $x$ and $y$ the labels in $S$ in cells $u_i$ and $u_{i+1}$ respectively.  

We remove $y$ first because $y$ corresponds to $i+1$ and $x$ corresponds to $i$.  Let $y \ge c_1 \ge c_2 \ge \cdots \ge c_k$ be the eviction route for the removal of $y$ and let $d_0 \ge d_1 \ge d_2 \ge \cdots \ge d_m$ be the eviction route for the removal of the entry $d_0$ in cell $u_i$ of $y \leftarrow S'$.  (Note that if $c_j \in \mathcal{A}$ for some $j$, all inequalities following $c_j$ are strict.  Similarly for the bumping path for $x$.)  Since $y$ appears after $x$ in column reading order, the removal of $y$ will scan the cell containing $x$.  Let $c_j$ be the entry in the eviction route which scans $x$.  If $x$ is not evicted and $c_j \in \mathcal{A}$, then $c_j \le x$.  (If $x$ is not evicted and $c_j \in \mathcal{A}'$, then $c_j < x$.)  If $x$ is evicted, then $x=c_{j+1}$ and $c_j$ is the entry in cell $u_i$ in $y \leftarrow S'$.  Either way, the entry remaining in the cell $u_i$ of $y \leftarrow S'$ is greater than or equal to the entry that scans the next entry during the removal of $y$.  Moreover, if the entry that scans the next entry during the removal of $y$ is in $\mathcal{A}'$, then the entry remaining in the cell $u_i$ of $S'$ is strictly greater than the entry that scans the next entry during the removal of $y$.

Assume that the removal of $y$ is complete and consider the removal of the entry $d_0$ in the cell $u_i$ of $y \leftarrow S'$.  Let $b$ be the first entry evicted during this removal and let $s_0$ be the cell containing $b$.  That means $d_0 > b$ if $d_0 \in \mathcal{A}$ (respectively $d_0 \ge b$ if $d_0 \in \mathcal{A}'$).  The entry $c_n$ (where $n$ might be $j$ or $j+1$) which left cell $u_i$ during the removal of $y$ was less than or equal to the entry $d_0$ (strictly less if $c_n \in \mathcal{A}'$), and hence some entry $c_l$ of the eviction route less than or equal to $d_0$ (strictly less than $d_0$ if $c_n \in \mathcal{A}'$) scanned the cell $s_0$ during the removal of $y$.  Either $c_l$ evicted the entry in $s_0$ or $c_l$ was weakly smaller (strictly smaller if $c_l \in \mathcal{A}'$) than the entry in $s_0$.  Either way, the entry leaving $s_0$ during the removal of $y$ was weakly less than (strictly less than if $c_l \in \mathcal{A}'$) the entry remaining there, and hence the entry $b$ is weakly greater (strictly greater if $c_l \in \mathcal{A}'$) than the next entry it sees from the escape route of $y$.  Repeated iteration of this argument shows that the entries in the escape route for $x$ remain weakly greater (and strictly greater if they are in $\mathcal{A}$) than the entries in the escape route for $y$, meaning the output for removing $x$ is weakly greater (strictly greater if the output is in $\mathcal{A}$) than the output for removing $y$.  
 
 The final step in our proof is to show that our eviction process is in fact the inverse of insertion.  This is a two-step process.  First we prove that if $S'$ is the result of the insertion $S \leftarrow k$ then applying the eviction process to the new entry created will produce the HCT $S$.  The second step is to prove that if $m$ is removed from an HCT $S'$ via eviction, then inserting the final evicted entry $e$ back into the resulting HCT $S$ via our insertion process produces the HCT $S'$.  We accomplish these two steps by proving that the eviction routes and bumping paths are the same.  We use a modified version of the proof that appeared in~\cite{AHM16}; the only difference in our approach is our consideration of the double alphabet $\mathcal{A} \cup \mathcal{A}'$.
 
 Assume first that $S'$ is the result of the insertion $S \leftarrow k$ and apply the removal process to the entry $m$ in the new cell created during this process.  We now prove that the bumping path $(c_1,d_1), (c_2,d_2), \dots , (c_g,d_g)$ for $S \leftarrow k$ is identical to the eviction route $(c'_1,d'_1), (c'_2, d'_2), \dots , (c'_h,g'_h)$ for the removal of $m$ from $S'$; that is, $(c'_1,d'_1)=(c_g,d_g), (c'_2,d'_2)=(c_{g-1},d_{g-1}), \dots , (c'_h,d'_h)=(c_1,d_1)$.  Assume not to obtain a contradiction.  
 
 Assume first that $(c'_i,d'_i)$ is the first cell in reverse reading order in the eviction route but not the bumping path.  This means that during the insertion of $k$ into $S$, the entry scanning this cell did not bump $S(c'_i,d'_i)$, but did bump the next entry in the eviction route, $S(c'_{i-1},d'_{i-1})$.  We know that the entry $e$ contained in $S'(c_{i-1}',d_{i-1}')$ after insertion is the entry scanning $S'(c_i',d_i')$ during the removal of $m$, since the eviction route agrees with the bumping path up to this point (in reverse reading order).  Furthermore, the bumping path does not include any cells between $(c'_i,d'_i)$ and $(c'_{i-1},d'_{i-1})$ so in fact $e$ is the entry which scanned the cell $(c'_i,d'_i)$ during insertion.  Since $e$ evicts $S'(c_i',d_i')$ during removal, we have $S'(c'_i , d_i'-1) \le e$ if $e \in \mathcal{A}$ (resp. $S'(c'_i, d_i'-1) < e$ if $e \in \mathcal{A'}$) and $e > S'(c'_i,d'_i)$ if $e \in \mathcal{A}$ (resp. $e \ge S'(c'_i,d'_i)$ if $e \in \mathcal{A}'$).  Since the entry in $S'(c'_i,d'_i-1)$ after removal is the same as the entry in $S(c'_i,d'_i-1)$ (by equality of bumping path and eviction route up to $(c'_i,d'_i))$, we have $e \ge S(c'_i,d'_i-1)$ if $e \in \mathcal{A}$ (resp. $e > S(c'_i,d'_i-1)$ if $e \in \mathcal{A}'$).  Since $e$ does not bump $S(c'_i,d'_i)$, we must have $e \ge S'(c'_i,d'_i)$ if $e \in \mathcal{A}$ (resp. $e > S'(c'_i,d'_i)$ if $e \in \mathcal{A}'$), a contradiction.  Therefore if the eviction route and the bumping path differ, their first difference (in reverse reading order) involves a cell which is in the bumping path but not the eviction route.
 
Assume next that $(c_i,d_i)$ is the first cell in reverse reading order in the bumping path of $k$, but not in the eviction route of $m$.  Let $e=S(c_i,d_i)$ be the entry bumped during the insertion of $k$ in $S$.  From this we know that $S(c_i,d_i-1)<e$ since $e$ is bumped during insertion, and $e\leq S(c_i,d_i+1)= S'(c_i,d_i+1)$ if $e\in \mathcal{A}$ (resp. $e<S'(c_i,d_i+1)$ if $e\in \mathcal{A}'$) since $e$ was bumped and at most one entry from each row can be bumped during insertion.  Thus, during eviction in $S'$, $e$ is the label that scans cell $(c_i,d_i)$.  Since $e$ does not evict $S'(c_i,d_i)$ it must be that either $e\leq S'(c_i,d_i)$ or $e>S'(c_i,d_i+1)$ if $e\in \mathcal{A}$ (resp. $e<S'(c_i,d_i)$ or $e\geq S'(c_i,d_i+1)$ if $e\in \mathcal{A}'$).  Note that since $S'(c_i,d_i)$ bumped $e$ from cell $(c_i,d_i)$ during the insertion process,  $S'(c_i,d_i)<e$ if $e\in \mathcal{A}$ (resp. $S'(c_i,d_i)\leq e$ if $e\in \mathcal{A}'$) and from the preceding discussion, we already know that $S'(c_i,d_i+1)\leq e$ if $e\in \mathcal{A}$ (resp. $S'(c_i,d_i+1)<e$ if $e\in \mathcal{A}'$), so $e$ should evict $S'(c_i,d_i)$ and we have a contradiction.

We have shown that there cannot be a position at which the bumping path for $S \leftarrow k$ differs from the eviction route for removing $m$ from $S'$, and hence the bumping path and eviction route must be identical.  Therefore $(m\leftarrow S')=S$, as desired.

To go the other way, assume that $m$ is removed from a hook composition tableau $S'$ with final removed entry $k$ to obtain an HCT $S$, and then $k$ is inserted into $S$.  We will prove that $(S\leftarrow k)=S'$ by showing the eviction route for the removal of $m$ is identical to the bumping path for the insertion of $k$.  This proof is similar to the above argument so we use broader strokes here to provide intuition supplementing the precision in the argument above.

If the eviction route and bumping path are identical then the proof is complete.  Therefore we may assume that they differ at some cell.  Either this cell is in the eviction route and not the bumping path or this cell is in the bumping path and not the eviction route.

Assume first that $(c_i',d_i')$ is the first cell in reading order that is in the eviction route but not the bumping path.  The next entry it evicts is in the bumping path, as are all the remaining entries evicted as the removal comes to a conclusion.  Therefore the entry $e$ to scan $(c_i',d_i')$ in $S$ is the entry that was evicted from $(c_i',d_i')$ of $S'$ during removal.  This entry $e$ scanning cell $(c_i',d_i')$ during insertion must be smaller than (resp. smaller than or equal to) $S'(c_i',d_i')$ and was situated in this cell before removal; hence it satisfies all conditions necessary to bump this entry, a contradiction.

Assume next that $(c_i,d_i)$ is the first cell in reading order in the bumping path but not the eviction route.  Since the bumping path and eviction route agree up to this point, this means that we are bumping an entry which was not evicted, hence the entry $e$ doing this bumping passed by this cell without evicting its entry during removal.  But since $e$ bumps the entry in this cell during insertion, we must have $e$ greater than or equal to (resp. greater than) the entry immediately to its left.  But then $e$ would have evicted the entry in cell $(c_i,d_i)$ of since $e$ is greater than (resp. greater than or equal to) this entry, a contradiction.

We have shown that there cannot be a first position at which the bumping path for inserting $k$ into $S$ differs from the eviction route for the removal of $m$ from $S'$, and hence the bumping path and eviction route must be identical.  Therefore inserting $k$ into $S$ produces $S'$, as desired.

We have shown that when we insert and then remove we get the same bumping and eviction route sequence.  We have also shown that if we remove and then insert we get the same bumping path and eviction route.  It follows that insertion and eviction are inverses of each other.
\end{proof}

\section{Acknowledgements}

The authors would like to thank Jeff Remmel for suggesting this problem.

\newcommand{\etalchar}[1]{$^{#1}$}

\end{document}